\newtheorem{thm}{Theorem}[section]
\newtheorem{lem}[thm]{Lemma}
\newtheorem{prop}[thm]{Proposition}
\theoremstyle{definition}
\newtheorem{exa}[thm]{Example}
\newtheorem*{thm1}{Theorem}
\renewcommand*{\mod}{\mathrm{mod}\,}
\newcommand{\ind}{\mathrm{ind}\,}
\newcommand{\rad}{\mathrm{rad}\,}
\renewcommand*{\dim}{\mathrm{dim}}
\newcommand{\Tr}{\mathrm{Tr}}
\newcommand{\Hom}{\mathrm{Hom}}
\newcommand{\End}{\mathrm{End}}
\newcommand{\op}{\mathrm{op}}
\newcommand{\D}{\mathrm{D}}
\newcommand{\soc}{\mathrm{soc}}
\newcommand{\supp}{\mathrm{supp} }
\renewcommand*{\top}{\mathrm{top}}
\renewcommand*{\Im}{\mathrm{Im}\,}
\begin{document}

\title[]{Selfinjective algebras without short cycles of
indecomposable modules}
\maketitle \vspace{0.6cm}
\begin{center}
ALICJA JAWORSKA-PASTUSZAK and ANDRZEJ SKOWRO\'NSKI \vspace{1cm}

 {\em Dedicated to Zygmunt Pogorza{\l}y on the
occasion of his 60th birthday} \end{center} \vspace{0.5cm}
\begin{abstract}
We describe the structure of finite dimensional selfinjective algebras over an arbitrary field without short cycles of indecomposable modules.
\end{abstract}
\vspace{0.5cm}
 \noindent {\em Keyword:} selfinjective algebra,
repetitive algebra, orbit algebra, tilted algebra, algebra of
finite
representation type, short cycle of modules\\
{\em 2010 MSC:} 16D50, 16G10, 16G60, 16G70

\section*{Introduction and the main result.}
Throughout the paper, by an algebra we mean a basic indecomposable
finite dimensional  associative $K$-algebra with identity over a
field $K$. For an algebra $A$, we denote by $\mod A$ the category
of finite-dimensional right $A$-modules, by $\ind A$ the full
subcategory of $\mod A$ formed by the indecomposable modules, by
$\Gamma_A$ the Auslander-Reiten quiver of $A$, and by $\tau_A$ and
$\tau_A^{-1}$ the Auslander-Reiten translations $\D\Tr$ and
$\Tr\D$, respectively. We do not distinguish between a module in
$\ind A$ and the vertex of $\Gamma_A$ corresponding to it. An
algebra $A$ is of finite representation type if the category $\ind
A$ admits only a finite number of pairwise nonisomorphic modules.
It is well known that  a hereditary algebra $A$ is of finite
representation type if and only if $A$ is of Dynkin type, that is,
the valued quiver $Q_A$ of $A$ is a Dynkin quiver of type
$\mathbb{A}_n$ ($n \geqslant 1$), $\mathbb{B}_n$ ($n\geqslant 2$),
$\mathbb{C}_n$ ($n\geqslant 3$), $\mathbb{D}_n$ ($n\geqslant 4$),
$\mathbb{E}_6$, $\mathbb{E}_7$, $\mathbb{E}_8$, $\mathbb{F}_4$ and
$\mathbb{G}_2$ (see \cite{DR1}, \cite{DR2}, \cite{G1}). A
distinguished class of algebras of finite representation type is
formed by the tilted algebras of Dynkin type, that is, the
algebras of the form $\End_H(T)$ for a hereditary algebra $H$ of
Dynkin type and a (multiplicity-free) tilting module $T$ in $\mod
H$. An algebra $A$ is called selfinjective if $A_A$ is an
injective module, or equivalently, the projective modules in $\mod
A$ are injective. For a selfinjective algebra $A$, we denote by
$\Gamma^s_A$ the stable Auslander-Reiten quiver of $A$, obtained
from $\Gamma_A$ by removing the projective modules and the arrows
attached to them.

We are concerned with the problem of describing the isomorphism
classes of selfinjective  algebras of finite representation type.
For $K$ algebraically closed, the problem was solved in the 1980's
by C. Riedtmann (see \cite{BLR}, \cite{Rd1}, \cite{Rd2},
\cite{Rd3}) via the combinatorial classification of the
Auslander-Reiten quivers of selfinjective algebras of finite
representation type. Equivalently, Riedtmann's classification can
be presented as follows (see \cite[Section 3]{S}): a nonsimple
selfinjective algebra $A$ over an algebraically closed field $K$
is of finite representation type if and only if $A$ is a socle
(geometric) deformation of an orbit algebra $\widehat{B}/G$, where
$\widehat{B}$  is the repetitive category of a tilted algebra $B$
of Dynkin type $\mathbb{A}_n$ ($n \geqslant 1$),  $\mathbb{D}_n$
($n\geqslant 4$), $\mathbb{E}_6$, $\mathbb{E}_7$, $\mathbb{E}_8$,
and $G$ is an admissible infinite cyclic group of automorphisms of
$\widehat{B}$. For an arbitrary field $K$, the problem seems to be
difficult (see \cite{BS1}, \cite{BS2}, \cite{Ho}, \cite{SY5} for
some results in this direction and \cite[Section 12]{SY6} for
related open problems). An important known result towards solution
of this general problem is the description of the stable
Auslander-Reiten quiver $\Gamma^s_A$ of a selfinjective algebra
$A$ of finite representation type established  by C. Riedtmann
\cite{Rd1} and G. Todorov \cite{T} (see also \cite[Section
IV.15]{SY7}): $\Gamma^s_A$ is isomorphic to the orbit quiver
$\mathbb{Z}\Delta / G$, where $\Delta$ is a Dynkin quiver of type
$\mathbb{A}_n$ ($n \geqslant 1$), $\mathbb{B}_n$ ($n\geqslant 2$),
$\mathbb{C}_n$ ($n\geqslant 3$), $\mathbb{D}_n$ ($n\geqslant 4$),
$\mathbb{E}_6$, $\mathbb{E}_7$, $\mathbb{E}_8$, $\mathbb{F}_4$ or
$\mathbb{G}_2$, and $G$ is an admissible infinite cyclic group of
automorphisms of the translation quiver $\mathbb{Z}\Delta$.
Therefore, we may associate to a selfinjective algebra $A$ of
finite representation type a Dynkin graph $\Delta(A)$, called the
Dynkin type of $A$, such that $\Gamma^s_A=\mathbb{Z}\Delta / G$
for a quiver $\Delta$ having $\Delta(A)$ as underlying graph. We
also mention that, for a tilted algebra $B$ of Dynkin type
$\Delta$, the orbit algebras $\widehat{B}/ G$  are selfinjective
algebras of finite representation type whose Dynkin type is the
underlying graph of $\Delta$.

Following \cite{RSS}, a {\it short cycle} in the module category
$\mod A$ of an algebra $A$ is a sequence
$$\xymatrix{X \ar[r]^f & Y \ar[r]^g  & X}$$
of two nonzero nonisomorphisms  between modules $X$ and $Y$ in
$\ind A$. It has been proved in \cite[Corollary 2.2]{RSS} that if
$M$ is a module in $\ind A$ which does not lie on a short cycle
then $M$ is uniquely determined (up to isomorphism) by its image
$[M]$ in the Grothendieck group $K_0(A)$. Moreover, by a result of
Happel and Liu \cite[Theorem]{HL} every algebra $A$ having no
short cycles in $\mod A$ is of finite representation type.

The following theorem is the main result of the paper.
\begin{thm1}
{ \it Let $A$ be a selfinjective algebra over a field $K$. The
following statements are equivalent.
\begin{enumerate}[\rm (i)]
\item $\mod A$ has no short cycles.
\item $A$ is isomorphic to an orbit algebra $\widehat{B}/ (\varphi \nu^2_{\widehat{B}})$,
where  $B$ is a tilted algebra of Dynkin type over $K$,
$\nu_{\widehat{B}}$ is the Nakayama automorphism of $\widehat{B}$,
and $\varphi$ is a strictly positive automorphism of
$\widehat{B}$.
\end{enumerate}}
\end{thm1}

The paper is organized as follows. In Section 1 we introduce the
orbit algebras of repetitive categories. Section 2 is devoted to
presenting basic results from the theory of selfinjective algebras
with deforming ideals, playing the fundamental role in the proof
of the main theorem. In Section 3 we discuss properties of stable
slices of Auslander-Reiten quivers of selfinjective algebras of
finite type, essential for further considerations. In Section 4 we
describe the selfinjective Nakayama algebras without short cycles
of indecomposable modules. In Section 5 we prove the Theorem for
the selfinjective algebras of Dynkin type. In the final Section 6
we complete the proof of the Theorem for arbitrary selfinjective
algebras.

For basic background on the representation theory applied in this
paper we refer to \cite{ASS}, \cite{ARS}, \cite{SY7} and
\cite{SY7a}.

\section{Orbit algebras of repetitive categories.}

Let $B$ be an algebra and $1_{B}=e_{1}+\cdots +e_{n}$ a
decomposition of the identity of $B$ into a sum of pairwise
orthogonal primitive idempotents. We associate to $B$ a
selfinjective locally bounded $K$-category $\widehat B$, called
the \emph{repetitive category} of $B$ (see~\cite{HW}). The objects
of $\widehat B$ are $e_{m,i}$, for $m\in{\mathbb{Z}}$, $i\in \{1,
\dots, n\}$, and the morphism spaces are defined as follows
\begin{displaymath}
\widehat B(e_{m,i},e_{r,j})=\left\{\begin{array}{ll}
e_{j}Be_{i},    & r=m,\\
D(e_{i}Be_{j}),& r=m+1,\\
0,& \textrm{otherwise}.
\end{array} \right.
\end{displaymath}
Observe that $e_{j}Be_{i}=\Hom _{B}(e_{i}B,e_{j}B)$, $D(e_{i}Be_{j})=e_{j}D(B)e_{i}$ and
\begin{displaymath}
\bigoplus_{(m,i)\in{\mathbb{Z}\times \{1, \dots ,n\}}} \widehat B(e_{m,i},e_{r,j})=e_{j}B \oplus D(Be_{j}),
\end{displaymath}
for any $r\in{\mathbb{Z}}$ and $j\in\{1, \dots ,n\}$.
We denote by $\nu_{\widehat B}$ the \emph{Nakayama automorphism} of $\widehat B$ defined by
\begin{displaymath}
\nu_{\widehat B}(e_{m,i})=e_{m+1,i} \quad \textrm{for all} \quad (m,i)\in \mathbb{Z}\times\{1, \dots ,n\}.
\end{displaymath}
Moreover, an automorphism $\varphi$ of the $K$-category $\widehat
B$ is said to be:
\begin{itemize}\setlength{\parskip}{0pt}%
\setlength{\itemsep}{2pt}%
 \item \emph{positive} if, for each pair $(m,i)\in{\mathbb{Z}\times \{1, \dots ,n\}}$, we have $\varphi(e_{m,i})=e_{p,j}$ for some $p\geq m$ and some $j\in\{1, \dots ,n\}$;
 \item \emph{rigid} if, for each pair $(m,i)\in{\mathbb{Z}\times \{1, \dots ,n\}}$, there exists $j\in\{1, \dots ,n\}$ such that $\varphi(e_{m,i})=e_{m,j}$;
 \item \emph{strictly positive} if it is positive but not rigid.
\end{itemize}
Thus the automorphisms $\nu^{r}_{\widehat B}$, for any $r\geq 1$,
are strictly positive automorphisms of $\widehat B$.

Recall that a group $G$ of automorphisms  of $\widehat B$ is said
to be \emph{admissible} if  $G$ acts freely on the set of objects
of $\widehat B$ and has finitely many orbits.
 Then, following P.~Gabriel \cite{G2}, we may consider the orbit category
 $\widehat B/G$ of  $\widehat B$ with respect to $G$ whose objects are the $G$-orbits of objects in $\widehat B$,
 and the morphism spaces are given by
\begin{displaymath}
(\widehat B/G)(a,b)=
\Big\lbrace (f_{y,x})\in{\prod_{(x,y)\in{a\times b}}} \widehat B(x,y)\hspace{2mm}|\hspace{2mm} gf_{y,x}=f_{gy,gx}, \forall_{g\in{G}, (x,y)\in{a\times b}}\Big\rbrace
\end{displaymath}
for all objects $a,b$ of $\widehat B/G$. Since $\widehat B/G$ has
finitely many objects and the morphism spaces in $\widehat B/G$
are finite dimensional, we have the associated finite dimensional selfinjective $K$-algebra $\bigoplus (\widehat B/G)$ which is the
direct sum of all morphism spaces in $\widehat B/G$, called the
\emph{orbit algebra} of $\widehat B$ with respect to $G$. We will
identify $\widehat B/G$ with $\bigoplus (\widehat B/G)$. For
example, for each positive integer $r$, the infinite cyclic group
$(\nu^{r}_{\widehat B})$ generated by the $r$-th power
$\nu^{r}_{\widehat B}$ of $\nu_{\widehat B}$ is an admissible
group of automorphisms of $\widehat B$, and we have the associated
selfinjective orbit algebra

\begin{displaymath}
T(B)^{(r)}=\widehat B/(\nu^{r}_{\widehat
B})=\left\{\begin{array}{cc} \left[\smallmatrix
b_{1} & 0 & 0 &\cdots & 0 & 0 & 0 \\
f_{2} & b_{2} & 0 &\cdots & 0 & 0 & 0 \\
0 & f_{3} & b_{3} &\cdots & 0 & 0 & 0 \\
\vdots &\vdots &\ddots &\ddots &\vdots &\vdots &\vdots \\
\vdots &\vdots &\vdots &\ddots &\ddots &\vdots &\vdots \\
0 & 0 & 0 &\cdots & f_{r-1} & b_{r-1} & 0 \\
0 & 0 & 0 &\cdots & 0 & f_{1} & b_{1}
\endsmallmatrix \right]; &
 \begin{matrix} b_{1}, \ldots ,b_{r-1}\in{B},\\   f_{1}, \ldots ,f_{r-1}\in
 D(B)\end{matrix}
\end{array}\right\},
\end{displaymath}
called the $r$-\emph{fold trivial extension algebra of B}. In
particular,  $T(B)^{(1)}\cong T(B)=B\ltimes D(B)$ is the trivial
extension of $B$ by the injective cogenerator $D(B)$.

\section{Selfinjective algebras with deforming ideals.}

In this section we present criteria for selfinjective algebras to
be socle equivalent to orbit  algebras of the repetitive
categories of algebras with respect to infinite cyclic
automorphism groups, playing fundamental role in the proof of the
Theorem.

Let $A$ be a selfinjective algebra. For a subset $X$ of $A$, we
may consider the left annihilator  $l_{A}(X)=\{a\in
A\hspace{2mm}|\hspace{2mm}aX=0\}$ of $X$ in $A$ and the right
annihilator $r_{A}(X)=\{a\in A\hspace{2mm}|\hspace{2mm}Xa=0\}$ of
$X$ in $A$. Then by a theorem due to T.~Nakayama
(see~\cite[Theorem~IV.6.10]{SY7}) the annihilator operation
$l_{A}$ induces a Galois correspondence from the lattice of right
ideals of $A$ to the lattice of left ideals of $A$, and $r_{A}$ is
the inverse Galois correspondence to $l_{A}$. Let $I$ be an ideal
of $A$, $B=A/I$, and $e$ an idempotent of $A$ such that $e+I$ is
the identity of $B$. We may assume that $1_{A}=e_{1}+\cdots
+e_{r}$ with $e_{1},\ldots ,e_{r}$ pairwise orthogonal primitive
idempotents of $A$, $e=e_{1}+\cdots +e_{n}$ for some $n\leq r$,
and $\{e_{i}\hspace{2mm}|\hspace{2mm}1\leq i\leq n\}$ is the set
of all idempotents in $\{e_{i}\hspace{2mm}|\hspace{2mm}1\leq i\leq
r\}$ which are not in $I$. Then such an idempotent $e$ is uniquely
determined by $I$ up to an inner automorphism of $A$, and is
called a \emph{residual identity} of $B=A/I$. Observe also that
$B\cong eAe/eIe$.

We have the following lemma from~\cite[Lemma~5.1]{SY4}.
\begin{lem}\label{2.1}
Let $A$ be a selfinjective algebra, $I$ an ideal of  $A$, and $e$
an idempotent of $A$ such that $l_{A}(I)=Ie$ or $r_{A}(I)=eI$.
Then $e$ is a residual identity of $A/I$.
\end{lem}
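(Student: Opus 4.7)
The plan is to show that $1_{A}-e\in I$, which is equivalent to saying $e+I=1_{B}$ in $B=A/I$, i.e.\ that $e$ is a residual identity of $A/I$ in the sense recalled just before the lemma.

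The main tool will be Nakayama's theorem quoted in the preceding paragraph: since $A$ is selfinjective, $l_{A}$ and $r_{A}$ are mutually inverse Galois correspondences between the lattices of right and left ideals of $A$. In particular, since $I$ is a two-sided ideal, $r_{A}(l_{A}(I))=I$ and $l_{A}(r_{A}(I))=I$. This identity will be the only non-trivial input; the rest of the argument is formal manipulation of annihilators.

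Assume first $l_{A}(I)=Ie$. Applying $r_{A}$ and using the double-annihilator identity, I get $I=r_{A}(l_{A}(I))=r_{A}(Ie)$. I would then expand
\begin{displaymath}
r_{A}(Ie)=\{a\in A\mid Iea=0\}=\{a\in A\mid ea\in r_{A}(I)\},
\end{displaymath}
the second equality being immediate from the definition of $r_{A}(I)$. Now testing $a=1_{A}-e$ gives $ea=e-e^{2}=0\in r_{A}(I)$, hence $1_{A}-e\in r_{A}(Ie)=I$, as required. The case $r_{A}(I)=eI$ is dealt with by the dual computation: $I=l_{A}(r_{A}(I))=l_{A}(eI)=\{a\in A\mid ae\in l_{A}(I)\}$, and taking $a=1_{A}-e$ yields $(1_{A}-e)e=0\in l_{A}(I)$, again giving $1_{A}-e\in I$.

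There is no real obstacle here; the only step that requires a moment of thought is the identification of $r_{A}(Ie)$ with $\{a\in A\mid ea\in r_{A}(I)\}$ (and its dual), which follows from associativity together with the fact that arbitrary elements of $I$ appear on the left of $e$ in $Ie$. Everything else is an immediate consequence of the Nakayama double-annihilator property for the selfinjective algebra $A$.
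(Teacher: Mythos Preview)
The paper does not actually supply a proof of this lemma; it is quoted verbatim from \cite[Lemma~5.1]{SY4}. Your argument is correct and is the natural one: from $l_{A}(I)=Ie$ and the Nakayama double-annihilator identity $r_{A}(l_{A}(I))=I$ (valid since the two-sided ideal $I$ is in particular a right ideal) you get $I=r_{A}(Ie)=\{a\in A\mid ea\in r_{A}(I)\}$, and plugging in $a=1_{A}-e$ gives $1_{A}-e\in I$; the case $r_{A}(I)=eI$ is strictly dual.

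One minor point worth recording: depending on how strictly one reads the paper's definition, ``residual identity'' may ask not only that $e+I=1_{B}$ but also that no primitive idempotent summand of $e$ lies in $I$. This extra condition also follows immediately from the hypothesis. Indeed, decompose $e=e_{1}+\cdots+e_{n}$ into primitive orthogonal idempotents; if some $e_{j}\in I$ then $e_{j}=e_{j}e\in Ie=l_{A}(I)$, hence $e_{j}I=0$ and in particular $e_{j}=e_{j}^{2}=0$, a contradiction (and dually for the case $r_{A}(I)=eI$). So your conclusion is complete under either reading.
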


We recall also the following proposition proved in \cite[Proposition~2.3]{SY1}.
\begin{prop}\label{2.2}
Let $A$ be a selfinjective algebra, $I$ an ideal of $A$, $B=A/I$, $e$ a residual identity of $B$,
and assume that $IeI=0$. The following conditions are equivalent.
\begin{enumerate}[\upshape (i)]
\item $Ie$ is an injective cogenerator in $\mod B$.
\item $eI$ is an injective cogenerator in $\mod B^{\op}$.
\item $l_{A}(I)=Ie$.
\item $r_{A}(I)=eI$.
\end{enumerate}
Moreover, under these equivalent conditions, we have $\soc (A)
\subseteq I$ and $l_{eAe}(I)=eIe=r_{eAe}(I)$.
\end{prop}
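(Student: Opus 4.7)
The plan is to use the Nakayama duality for the selfinjective algebra $A$ in order to identify the two-sided ideals $l_A(I)$ and $r_A(I)$ as injective cogenerators, and then to convert conditions (i)--(iv) into a dimension comparison inside $A$. Since $e$ is a residual identity of $B=A/I$, the complementary idempotent $f=1-e$ is a sum of primitive idempotents lying in $I$, so $f\in I$ and hence $Af\subseteq I$, $fA\subseteq I$. From $IeI=0$ one obtains $Ie\subseteq l_A(I)$ and $eI\subseteq r_A(I)$. Moreover, $Ie$ is closed under right multiplication by $A$: for $y\in I$ and $a\in A$,
\[
yea-yeae \;=\; ye(a-eae) \;\in\; Ie\cdot Af \;\subseteq\; IeI \;=\; 0,
\]
so $yea=yeae\in Ie$. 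Thus $Ie$ is a right $A$-submodule of $A$ annihilated on the right by $I$, and its right $B$-module structure agrees with the one induced by $B\cong eAe/eIe$; the analogous statement holds for $eI$ on the left.

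\textbf{Frobenius identification.} The assignment $a\mapsto (x+I\mapsto ax)$ is a right $B$-module isomorphism $l_A(I)\cong \Hom_A(B_A,A_A)$; combined with the Frobenius isomorphism $A_A\cong D({}_AA)$ and the tensor-Hom adjunction this yields $l_A(I)\cong D(B)$ as a right $B$-module, up to a Nakayama twist of the $B$-action that preserves injectivity and cogeneration. In particular $l_A(I)$ is always an injective cogenerator in $\mod B$ with $\dim_K l_A(I)=\dim_K B$, and symmetrically $r_A(I)$ is an injective cogenerator in $\mod B^{\op}$ with $\dim_K r_A(I)=\dim_K B$. I expect this step to be the main technical obstacle: the Nakayama twist must be tracked carefully, so that $l_A(I)$ with its natural right $B$-structure is indeed a minimal injective cogenerator, rather than just some $B$-module of dimension $\dim_K B$.

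\textbf{Equivalences.} For (i)$\Leftrightarrow$(iii): if $Ie$ is an injective cogenerator of $\mod B$ then $Ie$ contains a copy of $D(B)$, so $\dim_K Ie\geq \dim_K B=\dim_K l_A(I)$; combined with $Ie\subseteq l_A(I)$ this forces $Ie=l_A(I)$. Conversely, if $Ie=l_A(I)$ then $Ie$ is an injective cogenerator by the previous step. The equivalence (ii)$\Leftrightarrow$(iv) is symmetric. For (iii)$\Rightarrow$(iv), the Nakayama correspondence gives $r_A(Ie)=r_A(l_A(I))=I$, while a direct computation shows $r_A(Ie)=\{a\in A:ea\in r_A(I)\}$; hence for all $a\in A$, $a\in I$ if and only if $ea\in r_A(I)$. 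Since $1-e\in I$ and $I\cdot r_A(I)=0$, we have $(1-e)r_A(I)=0$, so $r_A(I)=e\,r_A(I)$. For $a'\in r_A(I)$ we then have $a'=ea'$, and since $ea'=a'\in r_A(I)$, the above equivalence applied to $a=a'$ gives $a'\in I$, whence $a'\in eA\cap I=eI$. Thus $r_A(I)\subseteq eI$, and combined with the reverse inclusion this proves (iv); the converse (iv)$\Rightarrow$(iii) is symmetric.

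\textbf{The moreover clause.} Under (iii), decompose $Ie=eIe+fAe$ using $1=e+f$. Since $A$ is basic, $fAe\subseteq\rad A$ (the primitive idempotents summing to $e$ and to $f$ are distinct), while $eIe\subseteq\rad(eAe)\subseteq\rad A$ because the quotient map $eAe\to eAe/eIe\cong B$ preserves the primitive idempotents $\{e_1,\dots,e_n\}$. Hence $l_A(I)=Ie\subseteq\rad A$, and the Nakayama correspondence yields $\soc(A_A)=l_A(\rad A)\subseteq l_A(l_A(I))=I$. Finally, $eIe\cdot I=e\cdot IeI=0$ gives $eIe\subseteq l_{eAe}(I)$; conversely any $x\in eAe$ with $xI=0$ satisfies $x\in l_A(I)\cap eAe=(eIe+fAe)\cap eAe=eIe$, giving the reverse inclusion. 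The equality $r_{eAe}(I)=eIe$ is symmetric.
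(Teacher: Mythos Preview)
The paper does not actually prove Proposition~\ref{2.2}; it is quoted verbatim from \cite[Proposition~2.3]{SY1}, so there is no ``paper's own proof'' to compare against.  Your argument is essentially correct and is in fact very close to the original proof in \cite{SY1}, which also identifies $l_A(I)\cong\Hom_A(B_A,A_A)$ and uses the Nakayama double-annihilator correspondence.  Two remarks are in order.

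First, your worry about the Nakayama twist is a non-issue.  You never need an explicit right $B$-module isomorphism $l_A(I)\cong D(B)$.  Injectivity and cogeneration of $l_A(I)$ in $\mod B$ follow directly from the natural adjunction $\Hom_B(M,\Hom_A(B,A))\cong\Hom_A(M,A)$ together with the fact that $A_A$ is an injective cogenerator in $\mod A$; the dimension equality $\dim_K l_A(I)=\dim_K B$ only needs the right $A$-module isomorphism $A_A\cong D(A)_A$, which holds for any basic selfinjective algebra regardless of the twist.  So the ``main technical obstacle'' you flag simply does not arise.

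Second, there is a small slip in the moreover clause.  You write $\soc(A_A)=l_A(\rad A)\subseteq l_A(l_A(I))=I$, but the Nakayama correspondence gives $r_A(l_A(I))=I$, not $l_A(l_A(I))=I$.  The fix is immediate: since $A$ is selfinjective, $\soc(A_A)=\soc({}_AA)$, so it suffices to show $\soc A\subseteq r_A(Ie)=r_A(l_A(I))=I$.  Your inclusion $Ie\subseteq\rad A$ gives $Ie\cdot\soc({}_AA)\subseteq(\rad A)\cdot r_A(\rad A)=0$, whence $\soc A\subseteq r_A(Ie)=I$ as desired.  With this correction the argument is complete.
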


The following theorem proved in \cite[Theorem~3.8]{SY2}
(sufficiency part) and~\cite[Theorem~5.3]{SY4}  (necessity part)
will be fundamental for our considerations.

\begin{thm}\label{2.3}
Let $A$ be a selfinjective algebra. The following conditions are equivalent.
\begin{enumerate}[\upshape (i)]
\item $A$ is isomorphic to an orbit algebra $\widehat B/(\varphi \nu_{\widehat B})$, where $B$
is an algebra and $\varphi$ is a positive automorphism of $\widehat B$.
\item There is an ideal $I$ of $A$ and an idempotent $e$ of $A$ such that
\begin{enumerate}[\upshape (1)]
\item $r_{A}(I)=eI$;
\item the canonical algebra epimorphism $eAe\to eAe/eIe$ is a retraction.
\end{enumerate}
\end{enumerate}
Moreover, in this case, $B$ is isomorphic to $A/I$.
\end{thm}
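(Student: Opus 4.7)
The equivalence splits naturally into two directions of differing depth: (i)$\Rightarrow$(ii) is a direct verification from the orbit description, while (ii)$\Rightarrow$(i) is the substantial direction and requires constructing a Galois covering of $A$ from the given data.

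For (i)$\Rightarrow$(ii), I would read off the required ideal and idempotent from the combinatorics of $\widehat B$. Writing $1_{B}=e_{1}+\cdots+e_{n}$ and picking a transversal for the action of $\langle\varphi\nu_{\widehat B}\rangle$ on the objects of $\widehat B$ that contains $e_{0,1},\ldots,e_{0,n}$, let $e$ be the image in $A$ of $e_{0,1}+\cdots+e_{0,n}$, and let $I$ be the image of the sum of all morphism spaces $\widehat B(e_{m,i},e_{r,j})$ with $r>m$. Then $A/I\cong B$, so $e$ is a residual identity of $B$; the $\mathbb Z$-stratification of $\widehat B$ gives a direct sum decomposition $eAe=B\oplus eIe$ which furnishes the retraction in~(2); and the equality $r_{A}(I)=eI$ follows by a direct computation on morphism spaces, using the Nakayama duality $\widehat B(e_{m,i},e_{r,j})=D\,\widehat B(e_{r,j},e_{m+1,i})$ built into $\widehat B$ to identify the strictly upper-triangular part annihilating $I$ on the right with exactly $eI$.

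For (ii)$\Rightarrow$(i), set $B=A/I$. The inclusion $eI\subseteq r_{A}(I)$ yields $IeI=0$, so Proposition~\ref{2.2} is applicable and gives that $Ie$ (resp.\ $eI$) is an injective cogenerator in $\mod B$ (resp.\ $\mod B^{\op}$), $\soc(A)\subseteq I$, and $l_{eAe}(I)=eIe=r_{eAe}(I)$. The retraction $\sigma\colon eAe\to B$ from~(2) embeds $B$ as a subalgebra of $eAe$, and together with the injective cogenerator property identifies $eIe$ with $eD(B)$ as $B$-bimodules; thus $eAe$ already reproduces two consecutive layers of the repetitive category $\widehat B$ built from $B$ and $D(B)$. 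The plan is then to globalize by using the Nakayama automorphism of the selfinjective algebra $A$ to translate the pair $(e,eIe)$ through a full cycle of primitive idempotents of $A$, at each step identifying the resulting local subalgebra with another pair of consecutive layers of $\widehat B$ built from the same bimodule $D(B)$; these local identifications assemble into a covering functor $F\colon\widehat B\to A$. The automorphism $\varphi$ of $\widehat B$ is then defined so as to record the discrepancy between the Nakayama permutation of $A$ and the naive shift $\nu_{\widehat B}$, and one verifies that $F$ induces $A\cong\widehat B/(\varphi\nu_{\widehat B})$. The main obstacle will be checking admissibility of the group $\langle\varphi\nu_{\widehat B}\rangle$ and compatibility of $F$ with composition across different layers; this reduces to an inductive application of conditions~(1) and~(2) to the filtration of $A$ by powers of $I$, the nilpotency of this filtration being guaranteed by the finite dimensionality of $A$.
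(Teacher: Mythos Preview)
The paper does not contain a proof of this theorem at all: immediately before stating it, the authors write that it was ``proved in \cite[Theorem~3.8]{SY2} (sufficiency part) and~\cite[Theorem~5.3]{SY4} (necessity part)'', and they use it purely as a black box. So there is no in-paper argument to compare your proposal against; the actual proofs live in the Skowro\'nski--Yamagata papers and occupy a substantial amount of work there.

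That said, your outline is pointed in the right direction but is far from complete on the hard side (ii)$\Rightarrow$(i). Two specific gaps: first, the step ``the injective cogenerator property identifies $eIe$ with $eD(B)$ as $B$-bimodules'' is the crux, and Proposition~\ref{2.2} only gives you that $Ie$ and $eI$ are injective cogenerators on the respective sides separately; upgrading this to a $B$-$B$-bimodule identification $eIe\cong D(B)$ compatible with the section $B\hookrightarrow eAe$ is genuinely delicate and is where most of the effort in \cite{SY2} goes. Second, your ``globalization'' step---translating $(e,eIe)$ by the Nakayama permutation and patching the local identifications into a covering functor $\widehat B\to A$---is only a slogan; one has to show that the successive layers really are copies of $B$ and $D(B)$ (not merely Morita-related), that the patched functor respects composition across layers, that the resulting automorphism $\varphi$ is \emph{positive}, and that the induced group is admissible. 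None of this follows from ``an inductive application of (1) and (2) to the filtration by powers of $I$'' without considerably more argument.

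For (i)$\Rightarrow$(ii) your description is closer to a proof, but note that defining $I$ as the image of all morphism spaces with $r>m$ requires care once you pass to the orbit algebra, since a single $G$-orbit of objects may meet several layers; the clean formulation (as in \cite{SY4}) works instead with the canonical ideal of $A$ whose quotient is the copy of $B$ sitting at level $0$ of the chosen transversal.
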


Let $A$ be a selfinjective algebra, $I$ an ideal of $A$, and $e$ a
residual identity  of $A/I$. Following \cite{SY1}, $I$ is said to
be a \emph{deforming ideal} of $A$ if the following conditions are
satisfied:
\begin{enumerate}[\upshape (D1)]
\item[(D1)] $l_{eAe}(I)=eIe=r_{eAe}(I)$;
\item[(D2)] the valued quiver $Q_{A/I}$ of $A/I$ is acyclic.
\end{enumerate}
Assume now that $I$ is a deforming ideal of $A$. Then we have a
canonical isomorphism of  algebras $eAe/eIe\to A/I$ and $I$ can be
considered as an $(eAe/eIe)$-$(eAe/eIe)$-bimodule. Denote by
$A[I]$ the direct sum of $K$-vector spaces $(eAe/eIe)\oplus I$
with the multiplication
\begin{displaymath}
(b,x)\cdot (c,y)=(bc,by+xc+xy)
\end{displaymath}
for $b,c\in{eAe/eIe}$ and $x,y\in I$. Then $A[I]$ is a $K$-algebra
with the identity $(e+eIe,1_{A}-e)$,  and, by identifying
$x\in{I}$ with $(0,x)\in{A[I]}$,  we may consider $I$ as an ideal
of $A[I]$. Observe that $e=(e+eIe,0)$ is a residual identity of
$A[I]/I=eAe/eIe\cong A/I$, $eA[I]e=(eAe/eIe)\oplus eIe$, and the
canonical  algebra epimorphism $eA[I]e\to eA[I]e/eIe$ is a
retraction.

The following properties of the algebra $A[I]$ were established in \cite[Theorem~4.1]{SY1}.

\begin{thm}\label{2.4}
Let $A$ be a selfinjective algebra and $I$ a deforming ideal of $A$. The following statements hold.
\begin{enumerate}[\upshape (i)]
\item $A[I]$ is a selfinjective algebra with the same Nakayama permutation as $A$ and $I$ is a deforming ideal of $A[I]$.
\item $A$ and $A[I]$ are socle equivalent.
\end{enumerate}
\end{thm}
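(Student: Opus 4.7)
My plan is to first verify that $A[I]$ is a well-defined associative $K$-algebra, then establish (i) by exhibiting a Nakayama form on $A[I]$ and checking the deforming-ideal axioms in the deformed algebra, and finally deduce (ii) by identifying $A$ and $A[I]$ modulo their common socle. Associativity of $(b,x)(c,y)=(bc,\,by+xc+xy)$ on $(eAe/eIe)\oplus I$ follows from condition~(D1): $eIe\cdot I=0=I\cdot eIe$ makes the $eAe$-actions on $I$ factor through $B=eAe/eIe$. A further consequence is $(eIe)^{2}=0$, so $eIe$ is a square-zero bimodule over $B$.

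For (i), the main point is selfinjectivity of $A[I]$. The deforming-ideal hypothesis together with Proposition~\ref{2.2} gives $\soc(A)\subseteq I$, $l_{A}(I)=Ie$ and $r_{A}(I)=eI$. Fix a Nakayama form $\phi\colon A\to K$; since the nondegeneracy of $\phi$ is concentrated on the $I$-component, it is natural to define $\widetilde{\phi}\colon A[I]\to K$ by $\widetilde{\phi}(b,x)=\phi(x)$. To verify that the induced bilinear form is nondegenerate, suppose $(b,x)$ lies in its right radical. Testing against $(0,y)$ yields $\phi((\sigma(b)+x)\cdot I)=0$ for any $K$-linear section $\sigma\colon B\to eAe$; since $(\sigma(b)+x)\cdot I$ is a right ideal of $A$ inside $\ker\phi$, it must vanish, so $\sigma(b)+x\in l_{A}(I)=Ie\subseteq I$, forcing $b=0$. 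Setting $b=0$ and testing against $(c,0)$ and $(0,y)$ then gives $\phi(x\cdot A)=0$, hence $x=0$. Once selfinjectivity is in hand, the Nakayama permutation of $A[I]$ coincides with that of $A$ because $\soc(A[I])\subseteq I$ inherits the same $(B,B)$-bimodule structure as $\soc(A)$. That $I$ is a deforming ideal of $A[I]$ is then immediate: the residual identity is still $e=(e+eIe,0)$; the equalities $l_{eA[I]e}(I)=eIe=r_{eA[I]e}(I)$ come from the split form $eA[I]e=B\oplus eIe$ together with $(eIe)I=0=I(eIe)$; and (D2) is inherited since $A[I]/I\cong A/I$.

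For (ii), the required socle equivalence is an algebra isomorphism $A/\soc(A)\to A[I]/\soc(A[I])$. Fix a $K$-linear section $\sigma\colon B\to eAe$; the map $\Sigma\colon A\to A[I]$, $\Sigma(\sigma(b)+x)=(b,x)$, is a vector-space isomorphism whose failure to be multiplicative is measured by the $2$-cocycle $\gamma(b_{1},b_{2})=\sigma(b_{1})\sigma(b_{2})-\sigma(b_{1}b_{2})\in eIe$. The strategy is to exploit the square-zero property $(eIe)^{2}=0$ and the annihilator identities from Proposition~\ref{2.2} to arrange, possibly after modifying $\sigma$, that $\gamma$ takes values in $\soc(A)$; then $\Sigma$ descends to the required algebra isomorphism on socle quotients.

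The principal obstacle is in step~(ii): showing that the cocycle $\gamma$ can be driven into the socle. Since $eIe$ is generically much larger than $\soc(A)\cap eIe$, one must use the full strength of the deforming-ideal structure -- the square-zero nature of $eIe$, the $(B,B)$-bimodule duality between $Ie$ and $eI$ afforded by the Nakayama form, and the acyclicity~(D2) of $Q_{A/I}$ -- to secure the socle-valued choice of section. The nondegeneracy check in (i) is technically easier but similarly hinges on the identity $l_{A}(I)=Ie$ from Proposition~\ref{2.2}, which is the key device for separating the $B$-component from the $I$-component in the $\widetilde{\phi}$-kernel.
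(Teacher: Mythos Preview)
The paper does not give its own proof of this statement; it simply quotes \cite[Theorem~4.1]{SY1}. So there is no ``paper's proof'' to compare your attempt against, and the question is whether your argument stands on its own.

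Your treatment of~(i) is largely sound, but one citation is off. You invoke Proposition~\ref{2.2} to obtain $l_{A}(I)=Ie$, $r_{A}(I)=eI$ and $\soc(A)\subseteq I$, yet that proposition carries the hypothesis $IeI=0$, which is \emph{not} part of the deforming-ideal axioms (D1)--(D2) and does not obviously follow from them. What you actually need for the nondegeneracy step is only the inclusion $l_{A}(I)\subseteq Ie$, and this \emph{can} be derived directly from (D1): if $aI=0$ then $a(1-e)=0$, so $a=ae$; then $eae\in eAe$ annihilates $I$, hence $eae\in l_{eAe}(I)=eIe$, whence $a=eae+(1-e)ae\in Ie$. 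With this fix your Frobenius-form argument for selfinjectivity goes through. The claim about the Nakayama permutation is asserted rather than proved, but is at least plausible once selfinjectivity is in hand.

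The genuine gap is in~(ii). You correctly set up the comparison via a linear section $\sigma\colon B\to eAe$ and the Hochschild-type $2$-cocycle $\gamma(b_{1},b_{2})=\sigma(b_{1})\sigma(b_{2})-\sigma(b_{1}b_{2})\in eIe$, and you correctly identify the crux: one must show that $\sigma$ can be modified so that $\gamma$ takes values in $\soc(A)$. But you do not carry this out; the final paragraph is an inventory of ingredients (``square-zero nature of $eIe$'', ``$(B,B)$-bimodule duality'', ``acyclicity~(D2)''), not an argument. This is precisely the substance of the Skowro\'nski--Yamagata result, and it requires a genuine cohomological or inductive construction exploiting (D2) to trivialize the obstruction layer by layer along the acyclic quiver $Q_{B}$. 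Until that step is executed, what you have is a plan for~(ii), not a proof.
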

We note that if $A$ is a selfinjective algebra, $I$ an ideal of
$A$, $B=A/I$, $e$ an idempotent of  $A$ such that $r_{A}(I)=eI$,
and the valued quiver $Q_{B}$ of $B$ is acyclic, then by Lemma
\ref{2.1} and Proposition \ref{2.2}, $I$ is a deforming ideal of
$A$ and $e$ is a residual identity of $B$.

The following theorem proved in~\cite[Theorem~4.1]{SY2} shows the importance of the algebras  $A[I]$.

\begin{thm}\label{2.5}
Let $A$ be a selfinjective algebra, $I$ an ideal of $A$, $B=A/I$
and $e$ an idempotent of $A$.  Assume that $r_{A}(I)=eI$ and
$Q_{B}$ is acyclic. Then $A[I]$ is isomorphic to an orbit algebra
$\widehat B/(\varphi\nu_{\widehat B})$ for some positive
automorphism $\varphi$ of $\widehat B$.
\end{thm}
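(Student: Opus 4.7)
The plan is to apply Theorem \ref{2.3} to the algebra $A[I]$, taking as ideal $I$ itself (now viewed inside $A[I]$) and as idempotent the same $e$. So I would need to verify the two conditions of Theorem \ref{2.3}(ii): namely that $r_{A[I]}(I)=eI$ and that the canonical epimorphism $eA[I]e\to eA[I]e/eIe$ is a retraction. Once these are in place, the conclusion of Theorem \ref{2.3} gives $A[I]\cong\widehat{B'}/(\varphi\nu_{\widehat{B'}})$ for $B'=A[I]/I$, and it will remain only to identify $B'$ with $B$.

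First, I would exploit the assumptions on $A$ to activate the machinery of deforming ideals. Since $r_A(I)=eI$ and $Q_B$ is acyclic, the remark immediately preceding the theorem (which rests on Lemma \ref{2.1} and Proposition \ref{2.2}) tells us that $I$ is a deforming ideal of $A$ and that $e$ is a residual identity of $B=A/I$. Theorem \ref{2.4}(i) then applies: $A[I]$ is selfinjective (with the same Nakayama permutation as $A$) and $I$ is again a deforming ideal of $A[I]$. Applying Proposition \ref{2.2} to $A[I]$, condition (D1) of the deforming ideal, namely $l_{eA[I]e}(I)=eIe=r_{eA[I]e}(I)$, together with the identity $A[I]/I=eAe/eIe\cong B$, forces $r_{A[I]}(I)=eI$. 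This establishes condition (1) of Theorem \ref{2.3}(ii) for $A[I]$.

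For condition (2), I would use the explicit construction of $A[I]$. By definition, $eA[I]e=(eAe/eIe)\oplus eIe$ as a $K$-vector space, with the product $(b,x)(c,y)=(bc,by+xc+xy)$. Hence the assignment $b\mapsto (b,0)$ is an algebra homomorphism $eAe/eIe\to eA[I]e$, and it is a section for the canonical projection $eA[I]e\to eA[I]e/eIe\cong eAe/eIe$, which is therefore a retraction. Together with the first step, Theorem \ref{2.3} now produces an algebra $B'$, a positive automorphism $\varphi$ of $\widehat{B'}$, and an isomorphism $A[I]\cong\widehat{B'}/(\varphi\nu_{\widehat{B'}})$ with $B'\cong A[I]/I$. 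Since the canonical isomorphism $eAe/eIe\to A/I=B$ identifies $A[I]/I=eAe/eIe$ with $B$, we conclude $A[I]\cong\widehat{B}/(\varphi\nu_{\widehat{B}})$, as desired.

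The argument contains no genuine obstacle; it is essentially the assembly of Lemma \ref{2.1}, Proposition \ref{2.2}, Theorem \ref{2.3}, and Theorem \ref{2.4}. The only point requiring care is bookkeeping of identifications: the ideal $I$ must consistently be regarded as sitting inside both $A$ and $A[I]$ via $x\mapsto (0,x)$, the idempotent $e$ must be recognized as the residual identity $(e+eIe,0)$ in $A[I]$, and the isomorphism $A[I]/I\cong B$ must be tracked through the identifications so that the $B'$ delivered by Theorem \ref{2.3} is genuinely $B$.
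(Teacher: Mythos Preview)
The paper does not supply its own proof of Theorem~\ref{2.5}; it merely cites \cite[Theorem~4.1]{SY2}. Your plan to deduce it from Theorem~\ref{2.3} is the natural one and is essentially how the pieces fit together. The retraction condition is, as you say (and as the paper itself observes just before Theorem~\ref{2.4}), immediate from the explicit description $eA[I]e=(eAe/eIe)\oplus eIe$.

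The only point that needs tightening is your justification of $r_{A[I]}(I)=eI$. Proposition~\ref{2.2} does \emph{not} assert that condition (D1) implies the equivalent conditions (i)--(iv); (D1) appears there only as a further consequence of (i)--(iv). So ``(D1) forces $r_{A[I]}(I)=eI$'' is not a valid step as written. Two clean fixes are available. First, you can transfer condition (i) of Proposition~\ref{2.2}: from $r_A(I)=eI$ one has $IeI=0$, and then Proposition~\ref{2.2} applied to $A$ yields that $Ie$ is an injective cogenerator in $\mod B$; since $Ie$, its right $B$-module structure, and $B$ itself are literally the same when one passes from $A$ to $A[I]$, condition (i) still holds for $A[I]$, whence (iv) gives $r_{A[I]}(I)=eI$. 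Alternatively, verify it by direct computation: for $(b,x)\in A[I]$ with representative $\tilde b\in eAe$, one has $(0,y)(b,x)=(0,y\tilde b+yx)$, so $(b,x)\in r_{A[I]}(I)$ iff $\tilde b+x\in r_A(I)=eI$; multiplying by $1-e$ gives $(1-e)x=0$, hence $x=ex\in eI$, and then $\tilde b\in I\cap eAe=eIe$, so $b=0$. Either route closes the gap, and the rest of your argument goes through.
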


We point out that there are selfinjective algebras $A$ with
deforming ideals $I$ such  that the algebras $A$ and $A[I]$ are
not isomorphic (see~\cite[Example~4.2]{SY2}). The following
criterion proved in~\cite[Proposition~3.2]{SY3} describes a
situation  when the algebras $A$ and $A[I]$ are isomorphic.

\begin{thm}\label{2.6}
Let $A$ be a selfinjective algebra with a deforming ideal $I$,
$B=A/I$, $e$ be a residual identity of $B$, and $\nu$ the Nakayama
permutation of $A$.  Assume that $IeI=0$ and $e_{i}\neq
e_{\nu(i)}$, for any primitive summand $e_{i}$ of $e$. Then the
algebras $A$ and $A[I]$ are isomorphic. In particular, $A$ is
isomorphic to an orbit algebra $\widehat B/(\varphi\nu_{\widehat
B})$ for some positive automorphism $\varphi$ of $\widehat B$.
\end{thm}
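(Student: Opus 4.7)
The plan is to construct an explicit algebra isomorphism $\Psi : A[I] \to A$ by splitting the canonical projection $p : eAe \to eAe/eIe = B$ at the level of $K$-algebras. Since $e$ is a residual identity of $B = A/I$ we have $1 - e \in I$, so that $A = eAe + I$ as $K$-vector spaces with $eAe \cap I = eIe$; the hypothesis $IeI = 0$ gives $(eIe)(eIe) \subseteq eIeIe = 0$ and $(eIe)\, I = I\, (eIe) = 0$, so $eIe$ is a square-zero two-sided ideal of $eAe$ and the $B$-bimodule structure on $I$ inherited from $A$ is well-defined and coincides with the one appearing in the definition of $A[I]$.

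Given any $K$-algebra section $s : B \to eAe$ of $p$, the idea is to put
$$\Psi : A[I] = B \oplus I \longrightarrow A, \qquad \Psi(b, x) = s(b) + x.$$
Bijectivity is then immediate (one has $s(B) \cap eIe = 0$, hence $eAe = s(B) \oplus eIe$, and therefore $A = s(B) \oplus I$), and multiplicativity is a direct computation: in $A$ one has
$$(s(b)+x)(s(c)+y) = s(b)s(c) + s(b)y + xs(c) + xy,$$
and using $s(bc) = s(b)s(c)$ together with the lift-independent bimodule actions $b\cdot y = s(b)y$ and $x\cdot c = xs(c)$ (lift-independence being exactly the identity $eIe \cdot I = I \cdot eIe = 0$), this matches the image under $\Psi$ of the $A[I]$-product $(bc,\, by + xc + xy)$.

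The heart of the argument is thus to produce an algebra section $s$ of the square-zero extension $0 \to eIe \to eAe \to B \to 0$. Condition (D2) makes $B$ triangular, so it is enough to lift the arrows of its Gabriel quiver compatibly with the defining relations. By Proposition \ref{2.2} we have $eIe = l_{eAe}(I) = r_{eAe}(I)$, so every nonzero $z \in e_i (eIe) e_j$ (with $e_i, e_j$ primitive summands of $e$) represents a homomorphism $e_j A \to e_i A$ annihilated by $I$ on both sides, forcing its image into the socle $S_{\nu(i)}$ of $e_i A$. The hypothesis $e_i \neq e_{\nu(i)}$ for every primitive summand $e_i$ of $e$ is used at this step to rule out the diagonal contributions to the obstruction that arise from such socle elements, and this allows an inductive construction of compatible lifts of all arrows and relations, producing $s$. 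Once $\Psi$ is obtained, the ``in particular'' clause is immediate from Theorem \ref{2.5} applied to $A[I]$.

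The main obstacle is precisely the construction of $s$: in general the extension $0 \to eIe \to eAe \to B \to 0$ need not split as $K$-algebras, as the example in \cite{SY2} illustrates (there $A \not\cong A[I]$). The real substance of the proof is therefore the translation of the global fixed-point-free condition on $\nu$ restricted to the primitive summands of $e$ into the vanishing of the local (Hochschild-type) obstruction to splitting this square-zero extension of $B$ by $eIe$.
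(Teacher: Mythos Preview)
The paper does not prove Theorem~\ref{2.6}; it is quoted from \cite[Proposition~3.2]{SY3}, so there is no in-paper argument to compare against. Your reduction is nonetheless the right one and in fact matches Theorem~\ref{2.3}: producing a $K$-algebra section $s:B\to eAe$ of the canonical surjection is exactly condition~(ii)(2) there, and once $s$ exists your map $\Psi(b,x)=s(b)+x$ is visibly an algebra isomorphism $A[I]\to A$, with the final clause then following from Theorem~\ref{2.5}.

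The gap is that you do not construct $s$. The phrases ``rule out the diagonal contributions to the obstruction'' and ``allows an inductive construction of compatible lifts'' are not an argument, and the heuristic behind them is insufficient: even for a triangular $B$, vanishing of all diagonal pieces $e_iMe_i$ of a bimodule $M$ does \emph{not} force a square-zero extension of $B$ by $M$ to split. For instance, with $Q$ the quiver $1\xrightarrow{\alpha}2\xrightarrow{\beta}3$, the algebra $B=KQ/(\beta\alpha)$ is triangular and the one-dimensional ideal $M=K\beta\alpha\subset KQ$ has zero diagonal, yet the extension $KQ\twoheadrightarrow B$ admits no algebra section. So something specific about the bimodule $eIe$, beyond diagonal vanishing, must enter, and that is precisely what your write-up omits. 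Your intermediate claim that a nonzero $z\in e_i(eIe)e_j$ has image contained in $\soc(e_iA)$ is likewise unjustified: from $z\in e_iI$ and $Iz=0$ you get only that the image $zA$ lies in $e_iI$, not in the socle, since $\rad A\not\subseteq I$ whenever $B$ fails to be semisimple. The substantive step of \cite[Proposition~3.2]{SY3}---turning the hypothesis $e_i\neq e_{\nu(i)}$ into an actual splitting of $eAe\to B$---remains to be supplied.
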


\section{Selfinjective algebras of finite representation type.}

Let $A$ be a selfinjective algebra of finite representation type.
We know from the  Riedtmann-Todorov theorem that the stable
Auslander-Reiten quiver $\Gamma^s_A$ of $A$ is of the form
$\mathbb{Z}\Delta /G$ for a valued Dynkin quiver $\Delta$ and an
admissible infinite cyclic group $G$ of automorphisms of the
translation quiver $\mathbb{Z}\Delta$. Following \cite{SY8}, a
full valued subquiver $\Delta$ of $\Gamma_A$ is said to be a
\emph{stable slice} if the following conditions are satisfied:
\begin{enumerate}[\rm (1)]
\item $\Delta$ is connected, acyclic and without projective modules.
\item For any valued arrow $\xymatrix{V \ar[r]^{(d, d')} & U}$ in $\Gamma_A$
 with $U$ in $\Delta$ and $V$ nonprojective, $V$ belongs to $\Delta$ or to $\tau_A\Delta$.
\item  For any valued arrow $\xymatrix{U \ar[r]^{(e, e')} & V}$ in $\Gamma_A$
 with $U$ in $\Delta$ and $V$ nonprojective, $V$ belongs to $\Delta$ or to $\tau^{-1}_A\Delta$.
\end{enumerate}
Observe that a stable slice $\Delta$ of $\Gamma_A$ is a Dynkin
quiver intersecting  every $\tau_A$-orbit in $\Gamma^s_{A}$
exactly once. A stable slice $\Delta$ of $\Gamma_A$ is said to be
\emph{semiregular} if $\Delta$ does not contain both the socle
factor $Q /\soc (Q)$ of an indecomposable projective module $Q$
and the radical $ \rad P$ of an indecomposable projective module
$P$. Further, following \cite{SY8}, a stable slice $\Delta$ of
$\Gamma_A$ is said to be \emph{double} $\tau_A$-\emph{rigid} if
$\Hom_A(X, \tau_AY)=0$ and $\Hom_A(\tau^{-1}_AX,Y)=0$ for all
indecomposable modules $X$ and $Y$ from $\Delta$.

Recall also that a selfinjective algebra $A$ is a \emph{Nakayama
algebra}  if every indecomposable projective module $P$ in $\mod
A$ is uniserial (its submodule lattice is a chain). We note that
then $A$ is of finite representation type and every indecomposable
module  in $\mod A$ is uniserial (see \cite[Theorem I.10.5]{SY7}).
\begin{thm} \label{4.1}
Let $A$ be a selfinjective algebra of finite representation type.
The following statements are equivalent.
\begin{enumerate}[\rm (i)]
\item $\Gamma_A$ admits a semiregular stable slice.
\item $A$ is not a Nakayama algebra.
\end{enumerate}
\end{thm}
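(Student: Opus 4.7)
The plan is to prove the two implications separately, with (i) $\Rightarrow$ (ii) being a direct combinatorial argument and (ii) $\Rightarrow$ (i) requiring a case analysis using the Riedtmann--Todorov description of $\Gamma^s_A$. For (i) $\Rightarrow$ (ii) I would argue the contrapositive. Assume $A$ is a selfinjective Nakayama algebra with $n$ indecomposable projectives $P_1,\ldots,P_n$ of common Loewy length $\ell$, and let $M_{i,k}$ denote the uniserial indecomposable of length $k$ with the same top as $P_i$, so that $P_i = M_{i,\ell}$, $\rad P_i = M_{i+1,\ell-1}$, and $P_i/\soc P_i = M_{i,\ell-1}$, indices being read modulo $n$. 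Since $\tau_A$ preserves length in this setting, the $\tau_A$-orbits in $\Gamma^s_A$ are parametrized by $k\in\{1,\ldots,\ell-1\}$, and any stable slice $\Delta$ hits the orbit of length $\ell-1$ in a unique module $M_{j,\ell-1}$. But this module is simultaneously the socle factor $P_j/\soc P_j$ and the radical $\rad P_{j-1}$, so $\Delta$ contains both a socle factor and a radical, contradicting semiregularity.

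For (ii) $\Rightarrow$ (i), assume $A$ is not Nakayama. Invoking the Riedtmann--Todorov isomorphism $\Gamma^s_A \cong \mathbb{Z}\Delta/G$ with $\Delta$ a valued Dynkin quiver and $G$ an admissible infinite cyclic group, I would exploit the identity $\tau_A(P/\soc P)=\rad P$ for each indecomposable projective $P$, so that the two distinguished vertices $P/\soc P$ and $\rad P$ always lie in one and the same $\tau_A$-orbit. The goal is to exhibit a stable slice $\Delta$ that avoids either every socle factor $Q/\soc Q$ of an indecomposable projective or every radical $\rad P$ of an indecomposable projective. I would proceed by case analysis on the Dynkin type. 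In the cases $\mathbb{D}_n$, $\mathbb{E}_6$, $\mathbb{E}_7$, $\mathbb{E}_8$, $\mathbb{B}_n$, $\mathbb{C}_n$, $\mathbb{F}_4$, $\mathbb{G}_2$, the branching or multiple-edge structure of $\Delta$ together with the explicit location of the projective-adjacent vertices in $\Gamma_A$ allows one to choose a fundamental domain meeting the distinguished orbits in vertices on only one side of the projective wall. In the remaining case $\Delta = \mathbb{A}_m$ with $A$ not Nakayama, the admissible group $G$ involves a nontrivial Dynkin involution of $\mathbb{A}_m$, which separates $P/\soc P$ and $\rad P$ into distinct orbit positions and permits an analogous construction.

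The main obstacle is the last subcase, Dynkin type $\mathbb{A}_m$ with $A$ not Nakayama, since the slice is linear and superficially resembles the Nakayama situation; the key point is that the involutive part of $G$ breaks the coincidence (present in the Nakayama case) between the socle-factor role and the radical role of the extreme modules of the slice, so that one may select a slice carrying only one of the two kinds of projective-adjacent modules. In each case, once the slice has been specified as a concrete full subquiver of $\Gamma_A$, the defining conditions (1)--(3) of a stable slice and the semiregularity condition are verified by direct inspection using the known position of the projective vertices.
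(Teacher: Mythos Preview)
Your argument for (i) $\Rightarrow$ (ii) is correct and is essentially the paper's.

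For (ii) $\Rightarrow$ (i) your plan diverges from the paper's, and the decisive $\mathbb{A}_m$ subcase rests on a false claim. You assert that if $A$ is not Nakayama and of Dynkin type $\mathbb{A}_m$, then the group $G$ with $\Gamma_A^s \cong \mathbb{Z}\mathbb{A}_m/G$ must involve the nontrivial Dynkin involution. This already fails for $m=3$: take $B$ to be the path algebra of $1 \to 2 \leftarrow 3$ and $A = T(B)$ its trivial extension (equivalently, the Brauer tree algebra of a line with three edges). Then $A$ is symmetric and not Nakayama, since the indecomposable projective at vertex $2$ has heart $S_1 \oplus S_3$ and is therefore not uniserial. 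A direct computation gives nine nonprojective indecomposables lying in three distinct $\tau_A$-orbits of size three, so $\Gamma_A^s \cong \mathbb{Z}\mathbb{A}_3/(\tau^3)$ and $G$ is generated by a pure translation. Thus your proposed mechanism for separating socle factors from radicals in type $\mathbb{A}$ is unavailable, and the argument cannot be completed along these lines. (The other Dynkin cases are also only asserted; even there, the position of the projective-adjacent vertices depends on the configuration, not merely on the pair $(\Delta,G)$, so ``direct inspection'' is not as immediate as suggested.)

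The paper avoids any case analysis. Since $A$ is not Nakayama, there exists an indecomposable projective $P$ with $P/\soc(P)$ not equal to $\rad Q$ for any indecomposable projective $Q$. One then forms the stable slice $\Delta_P$ consisting of $\tau_A^{-1}(P/\soc(P))$ together with all targets of nontrivial sectional paths in $\Gamma_A^s$ starting at $P/\soc(P)$. If $\Delta_P$ contained some $Q/\soc(Q)$, one obtains two parallel sectional paths in $\Gamma_A$, one through the $Y_i$ on $\Delta_P$ and one through their $\tau_A$-translates $X_i$, terminating at $Q/\soc(Q)$ and $Q$ respectively; the additivity of the length function on almost split sequences yields $l(X_{i-1})-l(Y_{i-1}) \geq l(X_i)-l(Y_i)$ for all $i$, forcing $l(X_0) > l(P/\soc(P))$ for a direct summand $X_0$ of $\rad P/\soc(P)$, a contradiction. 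This length argument is uniform in the Dynkin type and makes no reference to the structure of $G$.
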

\begin{proof}
Let $A$ be an indecomposable selfinjective Nakayama algebra and
$n$ be the rank of $K_0(A)$. Note that then each $\tau_A$-orbit of
$\Gamma_A$ consists of $n$ indecomposable modules having the same
length (see \cite[ Corollary V.4.2]{ASS} or \cite[Corollary
IV.2.9]{ARS}). Therefore, a $\tau_A$-orbit in $\Gamma_A$ which
contains the radical of an indecomposable projective $A$-module
consists entirely of the radicals of all indecomposable projective
$A$-modules. Hence, no stable slice of $\Gamma_A$ is semiregular,
and (i) implies (ii).

Assume $A$ is not a Nakayama algebra. It implies that there exists
a projective module $P$ in $\ind A$ such that $P/\soc (P)$ is not
a radical of any indecomposable projective module. Indeed, if
$P_1, \ldots, P_n$ is a complete family of indecomposable
projective modules in $\mod A$ and $P_k/\soc (P_k)=\rad P_{k+1}$
for $k \in \{1, \ldots, n\}$, with $P_{n+1}=P_1$, then $P_1,
\ldots, P_n$ are uniserial modules, and $A$ is a Nakayama algebra,
a contradiction. We denote by $\Delta_P$ the full subquiver of
$\Gamma_A$ given by the module $\tau^{-1}_A(P/\soc (P))$ and all
modules $X$ in $\ind A$ such that there is a nontrivial sectional
path in $\Gamma_A^s$ from $P/\soc (P)$ to $X$. Observe that
$\Delta_P$ is a stable slice in $\Gamma_A$. We shall show that
$\Delta_P$ does not contain $Q/ \soc (Q)$ for any indecomposable
projective module $Q$ in $\mod A$, and hence it is semiregular.

Suppose, on the contrary, that $\Delta_P$ contains $Q/\soc (Q)$
for some indecomposable projective module $Q$ in $\mod A$. From
the assumption imposed on $P$ we know that $\tau^{-1}_A(P/\soc
(P)) \neq Q/ \soc (Q)$. Then $\Gamma_A$ contains a full valued
subquiver of the form
 \xymatrixrowsep{0.1cm} \xymatrixcolsep{0.5cm}
\[\xymatrix@!R@!C{ &  Y_0 \ar[rd] & &  & & &\\
X_0 \ar[ur] \ar[dr] & & Y_1 \ar[rd]  & & &\\
&  X_1 \ar[rd] \ar[ru] &  &{\ddots}\ar[rd]& \\
&  &\ddots \ar[rd]& &  Y_{r-1} \ar[rd]&\\
&&&X_{r-1}\ar[rd] \ar[ru]& & Y_r\\
&&&& X_r \ar[ru] & }\]  where $Y_0=P/\soc (P)$, all modules $Y_i$
for $i \in \{1, \ldots,   r\}$ belong to $\Delta_P$, $X_r$ is the
indecomposable projective module $Q$, $Y_r=Q/\soc (Q)$ and $r \geq
1$ is the smallest number with this property. Clearly, $X_0
\rightarrow X_1 \rightarrow \ldots \rightarrow X_r$ form a
sectional path of irreducible homomorphisms between modules in
$\ind A$ which starts in a direct summand $X_0$ of $\rad P/ \soc
(P)$. Then, for $l$ denoting the length function on $\mod A$, we
have the inequalities
$$l(X_{i-1})+l(Y_i)\geq l(X_i)+l(Y_{i-1}),$$
for all $i \in \{1,\ldots ,r\}$, and the equality holds if the
number of indecomposable direct summands in the middle term of the
Auslander-Reiten sequence ending in $Y_i$ is two. Therefore,
$l(X_{i-1})-l(Y_{i-1})\geq l(X_i)-l(Y_i)$ for all $i \in
\{1,\ldots ,r\}$. Since $l(X_r)=l(Q)
>l(Q/\soc (Q))=l(Y_r)$ we conclude that $l(X_0)>l(Y_0)=P/\soc (P)$, a
contradiction because $X_0$ is a proper submodule of $P/\soc (P)$.
\end{proof}

For the class of selfinjective algebras of finite representation
type without short cycles in the module category we have the
following property of stable slices.

\begin{lem} \label{4.A}
Let $A$ be a selfinjective algebra which does not admit a short
cycle in $\mod A$. Then all stable slices of $\Gamma_A$ are double
$\tau_A$-rigid.
\end{lem}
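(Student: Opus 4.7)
The strategy is to argue by contradiction: suppose $\Delta$ is a stable slice of $\Gamma_A$ and $\Hom_A(X,\tau_A Y)\neq 0$ for some $X,Y\in\Delta$; the case $\Hom_A(\tau_A^{-1}X,Y)\neq 0$ is handled dually. The aim is to produce a short cycle in $\mod A$, contradicting the hypothesis. Note first that $\Delta$ meets each $\tau_A$-orbit in $\Gamma_A^s$ exactly once, so $\tau_A Y\notin\Delta$, whence $\tau_A Y\not\cong X$ and any nonzero $f\colon X\to\tau_A Y$ is automatically a non-isomorphism.

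The main technical input is the Auslander--Reiten sequence ending at $Y$,
\[
0\to\tau_A Y\to\bigoplus_{i=1}^{r}M_i\to Y\to 0,
\]
combined with slice conditions (2) and (3), which force each indecomposable summand $M_i$ to be either projective-injective or to lie in $\Delta\cup\tau_A\Delta$. Composing $f$ with an irreducible map $\tau_A Y\to M_i$ yields, for some index $i_0$, a nonzero morphism $X\to M_{i_0}$, since otherwise the composite of $f$ with the injection $\tau_A Y\to\bigoplus_i M_i$ would vanish, contradicting $f\neq 0$.

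Three subcases then arise according to the location of $M_{i_0}$. In subcase (a), $M_{i_0}\cong X$: then $X$ is a direct summand of the middle term, the map $\tau_A Y\to X$ is irreducible and hence a nonzero non-isomorphism, and together with $f$ it exhibits the short cycle $X\to\tau_A Y\to X$. In subcase (b), $M_{i_0}\in\Delta$ with $M_{i_0}\not\cong X$: the connectedness and acyclicity of $\Delta$ together with the nonvanishing of composites of irreducible maps along sectional paths in $\Delta$ furnish a nonzero non-isomorphism $M_{i_0}\to X$, producing a short cycle $X\to M_{i_0}\to X$. In subcase (c), $M_{i_0}=\tau_A Y'$ for some $Y'\in\Delta$ with $Y'\neq Y$ (since $\tau_A Y$ is never a summand of the middle term of its own almost split sequence): then the pair $(X,Y')$ again satisfies $\Hom_A(X,\tau_A Y')\neq 0$ and the argument iterates. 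Since $\Delta$ is finite, after finitely many iterations the process terminates in subcase (a) or (b), yielding the desired short cycle.

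The principal obstacle is subcase (b): the construction of the reverse nonzero non-isomorphism $M_{i_0}\to X$ when $M_{i_0}$ and $X$ are incomparable in the partial order on $\Delta$ defined by existence of directed paths. In this situation one descends to a common predecessor (or ascends to a common successor) of $X$ and $M_{i_0}$ in $\Delta$ and combines the resulting sectional-path compositions with the mesh relations in $\Gamma_A$ and the original hypothesis $\Hom_A(X,\tau_A Y)\neq 0$ to close the cycle.
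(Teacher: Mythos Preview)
Your overall strategy is natural, but subcase~(b) is not merely an obstacle: as written it is a genuine gap. In that case you have $X, M_{i_0}\in\Delta$ and a nonzero map $X\to M_{i_0}$ (the composite of $f$ with $\tau_A Y\to M_{i_0}$), and you need a nonzero non-isomorphism $M_{i_0}\to X$ to close a short cycle. There is no reason such a map should exist. Note that the arrow $\tau_A Y\to M_{i_0}$ forces $M_{i_0}$ to be an immediate $\Delta$-\emph{predecessor} of $Y$, while $X$ may sit anywhere in the tree $\Delta$; even when $X$ and $M_{i_0}$ are comparable (say a sectional path $X\to\cdots\to M_{i_0}$ exists), sectional compositions only produce maps in that direction, not the reverse. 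Your suggested fix via ``a common predecessor'' and ``mesh relations'' does not work: mesh relations identify different composites of irreducible maps but do not manufacture morphisms going the other way. Since your iteration in~(c) moves $Y$ strictly forward along arrows of $\Delta$, it can very well terminate in case~(b), so the argument is incomplete. (There is also a smaller omission: if the only $M_{i_0}$ with nonzero composite happens to be projective--injective, none of your three subcases applies.)

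The paper closes the argument differently and avoids this difficulty altogether. Instead of working with the Auslander--Reiten sequence at $Y$, it introduces the stable slice $\Delta_Y$ consisting of all modules reachable from $Y$ by sectional paths in $\Gamma_A^s$ (so $Y$ is a source of $\Delta_Y$). Because $X$ and $Y$ lie on the same slice $\Delta$, every nonzero $f\colon X\to\tau_A Y$ factors through a direct sum of modules from $\Delta_Y$; hence $\Hom_A(Z,\tau_A Y)\neq 0$ for some $Z\in\Delta_Y$. The sectional path $Y\to\cdots\to Z$ gives $\Hom_A(Y,Z)\neq 0$ by \cite{BS}. Thus $Z$ is the middle of a \emph{short chain} $Y\to Z\to\tau_A Y$, and one then invokes \cite[Theorem~1.6]{RSS}, which asserts that the middle of a short chain always lies on a short cycle. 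The point you are missing is precisely this last step: rather than trying to build the short cycle by hand from slice combinatorics, one detects a short chain and appeals to the general implication ``short chain $\Rightarrow$ short cycle''.
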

\begin{proof}
Since $\mod A$ has no short cycle, $A$ is of finite representation
type \cite[Theorem]{HL}. Suppose, on the contrary, that there is a
stable slice $\Delta$ of $\Gamma_A$ which is not double
$\tau_A$-rigid. Without loss of generality we may assume
$\Hom_A(X, \tau_AY) \neq 0$ for some indecomposable modules $X$
and $Y$ from $\Delta$. By $\Delta_Y$ we denote the stable slice of
$\Gamma_A$ formed by all modules $M$ in $\ind A$ such that there
is a sectional path in $\Gamma_A^s$ from $Y$ to $M$. Since $X$,
$Y$ belong to the same stable slice any nonzero homomorphism $f: X
\rightarrow \tau_AY$ factors through a direct sum $\oplus_{i=1}^m
Z_i$ of modules $Z_i$ from $\Delta_Y$ for some $m \geq 1$. Hence,
there is a nonzero homomorphism $g: Z \rightarrow \tau_AY$, where
$Z=Z_i$ for some $i\in \{1, \ldots,m\}$. Moreover,
$\Hom_A(Y,Z)\neq 0$ because the composition of irreducible
homomorphisms corresponding to the sectional path in $\Gamma_A^s$
is nonzero \cite{BS}. Hence the indecomposable module $Z$ is the
middle of a short chain of the form $Y \rightarrow Z \rightarrow
\tau_AY$. Applying \cite[Theorem 1.6]{RSS}, we conclude that $Z$
lies on a short cycle in $\mod A$. This contradicts the
assumption.
\end{proof}

The following example shows that the converse does not hold in
general.

\begin{exa}
Let $A=KQ/I$ be the bound quiver algebra, where $Q$ is the quiver
 \xymatrixcolsep{0.3cm}
\[\xymatrix@!R@!C{ &&& 1 \ar[llld]_(0.65){\alpha_1} \ar[ld]_(0.6){\beta_1} &&& \\
2 \ar[rrrd]_(0.25){\alpha_2} && 3  \ar[rd]_(0.25){\beta_2} & & 5 \ar[lu]_(0.4){\beta_4} && 6 \ar[lllu]_(0.35){\alpha_4} \\
&&& 4 \ar[ru]_(0.75){\beta_3} \ar[rrru]_(0.75){\alpha_3} &&&}\]
and the ideal $I=\langle \alpha_1\alpha_2 -\beta_1\beta_2,
\alpha_3\alpha_4-\beta_3\beta_4, \alpha_2\beta_3, \beta_2\alpha_3,
\alpha_4\beta_1, \beta_4\alpha_1 \rangle$. Then $A$ is an orbit
algebra $\widehat{B}/G$, where $B$ is a tilted algebra of type
$\mathbb{A}_3$ and $G$ is an admissible group of automorphisms of
$\widehat{B}$ generated by $\nu^2_{\widehat{B}}$. The
Auslander-Reiten quiver $\Gamma_A$ of $A$ is of the form
\xymatrixrowsep{0.3cm} \xymatrixcolsep{0.4cm}
\[\xymatrix{\ar@{--}[d]&&& &&& &&& &&& \\
\mathsmaller{P(3)} \ar@{-}[u] \ar[dr]\ar@{--}'[dd]'[dddd]&&&
\mathsmaller{P(1)} \ar[rdd]&&&
\mathsmaller{P(5)} \ar[rd] &&& \mathsmaller{P(4)}\ar[rdd] &&& \mathsmaller{P(3)}\ar@{--}'[dd]'[dddd] \ar@{-}[u]\\
& \bullet \ar[rd] & & \bullet\ar[rd] && \bullet \ar[rd] \ar[ru]&&
\bullet \ar[rd]&& \bullet \ar[rd] && \bullet\ar[rd] \ar[ru]
&\\
\bullet \ar[ru]\ar[rd] && \bullet \ar[ru]\ar[rd] \ar[ruu] &&
\bullet \ar[ru]\ar[rd]
&& \bullet \ar[ru]\ar[rd] && \bullet \ar[ruu] \ar[ru]\ar[rd] && \bullet \ar[ru]\ar[rd] && \bullet \\
& \bullet \ar[ru] & & \bullet\ar[ru] && \bullet \ar[ru] \ar[rd]&&
\bullet \ar[ru]&& \bullet \ar[ru] && \bullet\ar[ru]\ar[rd] &  \\
\mathsmaller{P(2)}\ar[ru]&&&&&&\mathsmaller{P(6)}\ar[ru] &&&&&&
\mathsmaller{P(2)}\\
\ar@{-}[u] &&& &&& &&& &&& \ar@{-}[u]}\] Let $\Delta$ be a stable
slice in $\Gamma_A$. We claim that $\Delta$ is double
$\tau_A$-rigid. Suppose there are indecomposable modules $X, Y \in
\Delta$ such that $\Hom_{A}(X, \tau_AY)\neq 0$. Since $A$ is of
finite representation type there exists a path of irreducible
homomorphisms \xymatrixcolsep{0.8cm}
\[\xymatrix{X=X_0 \ar[r]^(0.6){f_1} & X_1 \ar[r]^{f_2} & \ldots \ar[r]^(0.4){f_r} & X_r=\tau_A Y }\]
between nonprojective indecomposable modules $X_0, \ldots, X_r$
such that $f_r \ldots f_1 \neq 0$. Observe that $r\geq 8$. On the
other hand, $X_0, X_1, \ldots, X_r$ are of length at most $3$.
This contradicts Harada-Sai Lemma (see for example \cite[Lemma
III.2.1]{SY7}). Therefore, $\Hom_A(X, \tau_A Y)=0$ for any
indecomposable $X,Y \in \Delta$. Similarly, we prove that
$\Hom_A(\tau^-_{A}X, Y) =0$ for any indecomposable $X,Y \in
\Delta$. Therefore, $\Delta$ is  double $\tau_A$-rigid.
Nevertheless, $\xymatrix{P(1)\ar[r]^f & P(4) \ar[r]^g  & P(1)}$,
where  $\Im f=S(1)$ is the socle of $P(4)$ and $\Im g=S(4)$ is the
socle of $P(1)$, forms a short cycle in $\mod A$.
\end{exa}

The following consequence of Theorems 2.5, 2.6 and
\cite[Proposition 3.8  and Theorem 3.9]{SY8} will be crucial in
our proof of the main theorem.

\begin{thm} \label{4.4}
Let $A$ be a selfinjective algebra of finite representation type
such  that $\Gamma_A$ admits a semiregular double  $\tau_A$-rigid
stable slice $\Delta$. Moreover, let $M$ be the direct sum of the
indecomposable modules lying on $\Delta$, $I=r_A(M)$, and $B=A/I$.
Then the following statements hold.
\begin{enumerate}[\rm (i)]
\item $M$ is a tilting module in $\mod B$.
\item $H=\End_B(M)$ is a hereditary algebra of Dynkin type $\Delta$.
\item $T=D(M)$ is a tilting module in $\mod H$ with $\End_H(T)$ isomorphic to $B$.
\item $I$ is a deforming ideal of $A$ with $r_A(I)=eI$ for
 an idempotent $e$ of $A$, being a residual identity of $B=A/I$.
\item $A[I]$ is isomorphic to an orbit algebra $\widehat{B}/ (\psi \nu_{\widehat{B}})$ \
for a positive automorphism $\psi$ of $\widehat{B}$.
\item $A$ is socle equivalent to $A[I]$.
\end{enumerate}
\end{thm}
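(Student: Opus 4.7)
The plan is to obtain (i)--(vi) by combining the structural description of selfinjective algebras having semiregular double $\tau_A$-rigid stable slices provided by Proposition~3.8 and Theorem~3.9 of \cite{SY8} with the machinery recalled in Section~2.

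The first, and most substantial, step is to invoke the cited results of \cite{SY8} for the stable slice $\Delta$. These yield simultaneously (i)--(iii): the module $M$, viewed over $B = A/I = A/r_A(M)$, is a tilting module; its endomorphism algebra $H = \End_B(M)$ is hereditary of Dynkin type $\Delta$; and $T = D(M)$ is a tilting module in $\mod H$ with $\End_H(T) \cong B$. The same results furnish an idempotent $e$ of $A$ satisfying $r_A(I) = eI$, so that part of (iv) is already in place.

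With this data, the remainder of (iv) follows from the remark preceding Theorem~\ref{2.5}: once $r_A(I) = eI$ is available and the valued quiver $Q_B$ is known to be acyclic (which it is, because $B$ is a tilted algebra of Dynkin type by (iii)), Lemma~\ref{2.1} together with Proposition~\ref{2.2} guarantees that $I$ is a deforming ideal of $A$ and that $e$ is a residual identity of $B$. Then (v) is an immediate application of Theorem~\ref{2.5} to the triple $(A, I, e)$, producing a positive automorphism $\psi$ of $\widehat B$ with $A[I] \cong \widehat B/(\psi \nu_{\widehat B})$. Finally, (vi) is Theorem~\ref{2.4}(ii) applied to the deforming ideal $I$.

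I expect the main obstacle to be the first step, namely the passage from the purely combinatorial/homological datum of a semiregular double $\tau_A$-rigid stable slice $\Delta$ in $\Gamma_A$ to the tilting-theoretic identification of $M$, $B$, and $H$, together with the annihilator description $r_A(I) = eI$. This is precisely what Proposition~3.8 and Theorem~3.9 of \cite{SY8} accomplish; once those results are in hand, (iv)--(vi) are short, routine consequences of the apparatus assembled in Section~2.
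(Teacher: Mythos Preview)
Your proposal is correct and follows essentially the same approach as the paper, which presents Theorem~\ref{4.4} without a detailed proof, simply stating it as a consequence of Theorems~\ref{2.5}, \ref{2.6} and \cite[Proposition~3.8 and Theorem~3.9]{SY8}. Your write-up spells out the logical dependencies (invoking \cite{SY8} for (i)--(iii) and the condition $r_A(I)=eI$, then the remark after Theorem~\ref{2.4} for the rest of (iv), Theorem~\ref{2.5} for (v), and Theorem~\ref{2.4}(ii) for (vi)); the only discrepancy is that the paper also cites Theorem~\ref{2.6}, which is not actually needed for any of (i)--(vi) as stated, since only socle equivalence rather than isomorphism is claimed.
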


\section{Selfinjective Nakayama algebras.}

The aim of this section is to prove the implication
(i)$\Rightarrow$(ii) of the Theorem for selfinjective Nakayama
algebras.

It is well known that an algebra $B$ is a hereditary Nakayama
algebra if and only if $B$ is isomorphic to the algebra
$$T_n(F)= \left[\begin{matrix}
F & 0  &\cdots  & 0 \\
F & F  &\cdots  & 0  \\
\vdots &\vdots  &\ddots &\vdots \\
F & F  &\cdots & F
\end{matrix} \right]$$
of all lower triangular $n \times n$ matrices over a finite
dimensional division $K$-algebra $F$, for some positive natural
number $n$.

\begin{prop} \label{nowe4.1}
Let  $A$ be a selfinjective Nakayama algebra which does not admit
a short cycle in $\mod A$. Then $A$ is isomorphic to an orbit
algebra $\widehat{B}/ (\varphi \nu_{\widehat{B}}^2)$, where $B$ is
a hereditary Nakayama algebra and $\varphi$ is a strictly positive
automorphism of $\widehat{B}$.
\end{prop}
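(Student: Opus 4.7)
The strategy is to classify basic connected selfinjective Nakayama $K$-algebras as orbit algebras of the form $\widehat{B}/(\xi^n)$ for $B$ a hereditary Nakayama algebra and $\xi$ a generator of $\mathrm{Aut}(\widehat{B})$, to compute when such an algebra admits no short cycles purely in terms of the numerical invariants $(n,m)$, and then to rewrite $\xi^n$ in the desired form $\varphi\,\nu^2_{\widehat{B}}$. Let $A$ be a selfinjective Nakayama algebra with $n = \rk K_0(A)$ simples, common Loewy length $m$ of the indecomposable projectives, and $F := \End_A(S)$ for any simple $S$ (a finite-dimensional division $K$-algebra, independent of $S$ because $A$ is connected selfinjective). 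The indecomposables in $\mod A$ are the uniserial modules $M(i,l)$ with $\top M(i,l) = S_i$ and composition length $l$, for $1 \le i \le n$ and $1 \le l \le m$.

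First I would identify the structure of $\widehat{B}$ for $B := T_{m-1}(F)$. Using the formulas $\widehat{B}(e_{s,i}, e_{s,j}) = e_j B e_i$ and $\widehat{B}(e_{s,i}, e_{s+1,j}) = \D(e_i B e_j)$, a careful radical analysis shows that the only direct arrows are the within-level arrows $(s,i) \to (s,i+1)$ for $1 \le i \le m-2$ together with a single cross-level arrow $(s,m-1) \to (s+1,1)$; every other cross-level morphism $(s,i) \to (s+1,j)$ factors through a composition of these through the unique corner arrow. Hence the quiver of $\widehat{B}$ is a bi-infinite line and $\widehat{B}$ is the corresponding path $F$-algebra modulo the ideal of paths of length $m$ (reflecting the Loewy length $m$ of each indecomposable projective of $\widehat{B}$). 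In particular $\mathrm{Aut}(\widehat{B})$ is infinite cyclic, generated by the shift $\xi$ by one vertex along this line, and $\nu_{\widehat{B}} = \xi^{m-1}$. The orbit algebra $\widehat{B}/(\xi^n)$ is therefore the bound quiver $F$-algebra of $\tilde{A}_{n-1}$ with paths of length $m$ vanishing, and by the uniqueness (up to isomorphism) of basic connected selfinjective Nakayama $F$-algebras with given $(n,m)$ it is isomorphic to $A$.

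Next I would determine the no-short-cycle condition combinatorially. Since $M(i,l)$ is uniserial with quotients $M(i,k)$ and submodules $M(j - l' + k, k)$, one obtains $\Hom_A(M(i,l), M(j,l')) \ne 0$ iff there exists $k \in [1, \min(l,l')]$ with $k \equiv i - j + l' \pmod n$. Imposing this in both directions and excluding the isomorphism case (which forces $k = l = l'$ and $i = j$), I would deduce that a pair of distinct indecomposables gives a short cycle iff $l + l' \ge n + 2$, while a nontrivial radical endomorphism of $M(i,l)$ requires $l > n$. Since $m \le n$ for any selfinjective Nakayama without short cycles (otherwise the length-$m$ projectives have radical endomorphisms), both cases collapse to the single condition that no short cycle exists iff $n \ge 2m-1$.

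Finally, writing $\xi^n = \xi^{\,n-2(m-1)} \cdot \xi^{2(m-1)} = \varphi \cdot \nu^2_{\widehat{B}}$ with $\varphi := \xi^{\,n-2(m-1)}$, I would check that $\xi^k$ sends $(s,i)$ to the vertex at level $s + \lfloor (i-1+k)/(m-1)\rfloor$: this is $\ge s$ for $k \ge 0$ (so $\xi^k$ is positive) and the level shift depends nontrivially on $i$ whenever $k \ge 1$ (so $\xi^k$ is not rigid). Thus $\varphi$ is strictly positive precisely when $n - 2(m-1) \ge 1$, i.e., $n \ge 2m-1$, which is exactly the no-short-cycle condition established in the previous step. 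The main technical obstacle is the correct identification of the quiver of $\widehat{T_{m-1}(F)}$: the naive picture with multiple cross-level arrows per vertex is misleading, and only the factorization argument shows the quiver is a single bi-infinite line. This linearity is what forces $\mathrm{Aut}(\widehat{B}) \cong \mathbb{Z}$ and makes every $n \ge 2m-1$ (not just multiples of $m-1$) realizable as the rank of the orbit algebra.
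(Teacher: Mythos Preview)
Your approach is genuinely different from the paper's and considerably more direct: instead of invoking the deforming-ideal machinery of Theorems~\ref{2.4}--\ref{2.6} and the double $\tau_A$-rigidity of a stable slice, you classify selfinjective Nakayama algebras explicitly as orbit algebras $\widehat{T_{m-1}(F)}/(\xi^n)$, compute the no-short-cycle condition combinatorially as $n\ge 2m-1$, and then read off $\varphi=\xi^{\,n-2(m-1)}$. The Hom-combinatorics and the positivity analysis of $\xi^k$ are correct, and this route has the virtue of making the numerical obstruction completely transparent. The paper, by contrast, never writes down the inequality $n\ge 2m-1$; it extracts $B=A/I$ intrinsically from $A$ via a stable slice and then uses Theorem~\ref{2.6} to identify $A\cong A[I]$, which has the advantage of working uniformly without any classification step.

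There is, however, a real gap in your argument over non-algebraically-closed $K$. The claim that basic connected selfinjective Nakayama $K$-algebras are determined by $(F,n,m)$ is false when $\mathrm{Aut}_K(F)\ne 1$: for $K=\mathbb{R}$, $F=\mathbb{C}$, $n=1$, $m=2$, both $\mathbb{C}[x]/(x^2)$ and $\mathbb{R}\langle i,x\rangle/(i^2+1,\,x^2,\,ix+xi)$ are local selfinjective Nakayama $\mathbb{R}$-algebras with these invariants, yet the first is commutative and the second has centre $\mathbb{R}$. Correspondingly, $\mathrm{Aut}(\widehat{B})$ is \emph{not} infinite cyclic: the rigid automorphisms induced by $\mathrm{Aut}_K(F)$ are also present, and different admissible generators $g=\xi^n\tau$ (with $\tau$ rigid) produce non-isomorphic orbit algebras. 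Your strategy is easily repaired once you know $A\cong \widehat{B}/(g)$ for some such $g$, since positivity depends only on the action on objects and $g\nu_{\widehat{B}}^{-2}=\xi^{\,n-2(m-1)}\tau$ is strictly positive iff $n\ge 2m-1$; but establishing that isomorphism without the false uniqueness claim requires either covering theory for the locally bounded $K$-category underlying $A$, or constructing $B$ as a quotient $A/I$ as the paper does.
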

\begin{proof}
Let $P$ be an indecomposable projective module in $\mod A$. Since
$P$ is a uniserial module, the radical series
$$P \supset \rad P \supset \rad^2P \supset \cdots \supset \rad^nP \supset \rad^{n+1}P=0$$
of $P$ is its unique composition series, and hence $n+1=l(P)$ (see
\cite[Theorem I.10.1]{SY7}). Moreover, $n+1$ is the Loewy length
$ll(A)$ of $A$. We define $M_i=\rad^{n-i+1}P$ for $i \in
\{1,\ldots ,n\}$. It follows also from \cite[Proposition III.8.6
and Theorem III.8.7]{SY7} that there is in $\Gamma_A$ a sectional
path $\Delta$ of the form
$$M_1 \rightarrow M_2 \rightarrow \cdots \rightarrow M_{n-1} \rightarrow M_n$$
with $M_1=\soc (P)$ and $M_n=\rad P$. Moreover, $\Delta$ is a
stable slice of $\Gamma_A$. For each $i \in \{1,\ldots, n\}$, let
$P_i$ be the projective cover of $M_i$ in $\mod A$. We note that
there is a sectional path in $\Gamma_A$ of the form
$$P_i \rightarrow P_i/ \soc (P_i) \rightarrow \cdots \rightarrow M_i$$
and $M_i=P_i/ \rad^iP_i$, for any $i \in \{1,\ldots ,n\}$. We
claim that the projective modules $P_1,\ldots ,P_n$ are pairwise
different. Indeed, suppose that $P_i=P_j$ for some $i < j$ in
$\{1,\ldots ,n\}$. Clearly, we have in $\mod A$ a canonical proper
monomorphism $M_i \rightarrow M_j$. On the other hand,
$M_i=P_i/\rad^iP_i=P_j/\rad^iP_j$ and $M_j=P_j/\rad^jP_j$, and
consequently there is in $\mod A$ a proper epimorphism
$M_j\rightarrow M_i$. But then we have in $\mod A$ a short cycle
$M_i \rightarrow M_j \rightarrow M_i$, which contradicts the
assumption imposed on $A$. Observe now that we have the equalities
$$P_i/\soc (P_i)= \rad P_{i+1}= \tau_A(P_{i+1}/\soc (P_{i+1}))$$
for any $i \in \{1,\ldots ,n\}$, where $P_{n+1}=P$. We may choose
pairwise orthogonal primitive idempotents $e_j$, $j \in \{1,
\ldots ,m\}$, such that $1_A=e_1+\ldots +e_m$, $m \geq n$, and
$P_i=e_iA$ for any $i \in \{1,\ldots,n\}$. Let $M$ be the direct
sum of the modules $M_1,\ldots ,M_n$ lying on the stable section
$\Delta$, $I=r_A(M)$, and $B=A/I$. Observe that $e=e_1+ \ldots
+e_n$ is a residual identity of $B$. Since $A$ is a Nakayama
algebra, we conclude that $B$ is also a Nakayama algebra (see
\cite[Lemma I.10.2]{SY7}). Moreover, since $M$ is a faithful
module in $\mod B$, there is in $\mod B$ a monomorphism $B
\rightarrow M^r$ for some positive integer $r$ (see \cite[Lemma
II.5.5]{SY7}). Then we obtain that $M_1,\ldots ,M_n$ form a
complete set of pairwise nonisomorphic indecomposable projective
modules in $\mod B$, $B_B=M_1\oplus \ldots \oplus M_n$, and $B$ is
a hereditary Nakayama algebra of Loewy length $n$. Clearly, $M_n$
is the unique indecomposable projective-injective module in $\mod
B$, and the Auslander-Reiten quiver $\Gamma_B$ of $B$ is a full
valued translation subquiver of the Auslander-Reiten quiver
$\Gamma_A$ of $A$. Consider the indecomposable projective modules
$P_{n+1},\ldots ,P_{2n}$ in $\mod A$ such that $\rad
P_j=P_{j-1}/\soc (P_{j-1})$ for $j \in \{n+1,\ldots ,2n\}$.
Further, let $\nu$ be the Nakayama permutation of $A$, which is  a
permutation of $\{1,\ldots,m\}$. Then it follows from the above
discussion that $j=\nu(j-n)$ for any $j \in \{n+1,\ldots,2n\}$.
Clearly, $P_{n+1},\ldots,P_{2n}$ are pairwise different projective
modules in $\Gamma_A$, because the projective modules
$P_1,\ldots,P_n$ are pairwise different. We also observe that
$P_i\neq P_j$ in $\Gamma_A$ for any $i \in \{1,\ldots,n\}$ and $j
\in \{n+1,\ldots,2n\}$. Indeed, suppose that $P_i=P_j$ for some $i
\in \{1,\ldots,n\}$ and $j \in \{n+1, \ldots,2n\}$. Then there is
a short cycle in $\mod A$ of the form
$$\xymatrix{P_i \ar[r]^f & M_n \ar[r]^(.4)g  & P_j=P_i}$$
because the factor module $M_i$ of $P_i$ is a submodule of $M_n$
and a factor module of $M_n$ is a submodule of $P_j$,
contradicting our assumption on $A$. In particular, we conclude
that $m \geq 2n$.

We will prove now that $I$ is a deforming ideal of $A$ with
$l_A(I)=Ie$ and $r_A(I)=eI$. We first note that the valued quiver
$Q_B$ of $B$ is acyclic, because $B$ is a hereditary Nakayama
algebra with $Q_B$ of the form
$$\xymatrix{1  & 2\ar[l]  & \ldots \ar[l]  & n-1 \ar[l] & n \ar[l]}.$$
Then, in order to show that $I$ is a deforming ideal of $A$, it is
enough to show that $l_A(I)=Ie$, by Proposition \ref{2.2}. We
denote by $J$ the trace ideal of $M$ in $A$, that is, the ideal in
$A$ generated by the images of all homomorphisms from $M$ to $A$
in $\mod A$. Observe that $J$ is a right $B$-module, and hence
$Je=J$ and $JI=0$. Since $\mod A$ is without short cycles and
$\Delta$ is a stable slice of $\Gamma_A$, it follows from Lemma
\ref{4.A} that $\Delta$ is double $\tau_A$-rigid. Then, applying
\cite[Lemmas 3.10, 3.11 and 3.12]{SY8}, we obtain that $J
\subseteq I$, $l_A(I)=J$, and $eIe=eJe$. We claim that
$(1-e)Ie=(1-e)Je$. Clearly, $(1-e)I=(1-e)A$ because $1-e \in I$ by
the definition of $e$. Further, there is a canonical isomorphism
of right $eAe$-modules $(1-e)Ae\cong \Hom_A(eA,(1-e)A)$ in $\mod
A$ (see \cite[Lemma I.8.7]{SY7}). Let $f: eA \rightarrow (1-e)A$
be a nonzero homomorphism in $\mod A$. Since $eA =P_1 \oplus
\cdots \oplus P_n$, it follows from the definition of $\Delta$
that there exists a positive integer $s$ and homomorphisms $g: eA
\rightarrow M^s$, $h: M^s \rightarrow (1-e)A$ such that $f=hg$.
But then $\Im f \subseteq \Im h \subseteq (1-e)J$. This shows that
$(1-e)Ie=(1-e)Ae \subseteq (1-e)Je$. Then we obtain that $Ie = eIe
\oplus (1-e)Ie \subseteq eJe \oplus (1-e)Je =J$. Since $J
\subseteq I$ we have also $J=Je \subseteq Ie$. Summing up, we
conclude that $Ie=J=l_A(I)$. In particular, we get $IeI=0$. Then,
applying Proposition \ref{2.2}, we obtain that $I$ is a deforming
ideal of $A$ with $l_A(I)=Ie$ and $r_A(I)=eI$.

It follows from Theorem \ref{2.5} that $A[I]$ is isomorphic to an
orbit algebra $\widehat{B}/(\psi \nu_{\widehat{B}})$ for some
positive automorphism $\psi$ of $\widehat{B}$. Moreover, by
Theorem \ref{2.4}, $A$ is socle equivalent to $A[I]$ and both
algebras have the same Nakayama permutation. Observe also that, if
$e_i$ is a primitive summand of $e$, then $i \in \{1,\ldots,n\}$
and $e_{\nu(i)}=e_{n+i}$, and consequently $e_i \neq e_{\nu(i)}$.
Then it follows from Theorem \ref{2.6} that the algebras $A$ and
$A[I]$ are isomorphic. In particular, we conclude that $A$ is
isomorphic to the orbit algebra
$\widehat{B}/(\psi\nu_{\widehat{B}})$. Finally, we claim that
$\psi=\varphi \nu_{\widehat{B}}$ for a strictly positive
automorphism $\varphi$ of $\widehat{B}$. Since $P_1, \ldots, P_n,
P_{n+1}, \ldots, P_{2n}$  are pairwise different projective
modules in $\Gamma_A$, and  $n+i=\nu(i)$ for any $i \in
\{1,\ldots,n\}$, we conclude that $\psi=\varphi\nu_{\widehat{B}}$
for a positive automorphism $\varphi$ of $\widehat{B}$. Suppose
that $\varphi$ is a rigid automorphism of $\widehat{B}$. Then
$m=2n$ and $\nu^2$ is the identity permutation of
$\{1,\ldots,m\}$. Then we have in $\mod A$ a short cycle of the
form
$$\xymatrix{P_1 \ar[r]^u & P_{n+1} \ar[r]^v  & P_1}$$
with $\Im u$ and $\Im v$ being simple modules, which contradicts
the assumption imposed on $A$. Therefore, we conclude that $A$ is
isomorphic to an orbit algebra $\widehat{B}/(\varphi
\nu^2_{\widehat{B}})$ for a strictly positive automorphism
$\varphi$ of $\widehat{B}$.
\end{proof}

\section{Selfinjective algebras of Dynkin type.}

Let $B$ be a triangular algebra (i.e. the quiver $Q_{B}$ is
acyclic) and $e_{1}, \ldots ,e_{n}$ be pairwise orthogonal
primitive idempotents of $B$ with $1_{B}=e_{1}+\cdots+e_{n}$. We
identify $B$ with the full subcategory $B_{0}$ of the repetitive
category $\widehat B$ given by the objects $e_{0,j}$, $1\leq j\leq
n$. For a sink $i$ of $Q_{B}$, the {\it reflection} $S^{+}_{i}B$
of $B$ at $i$ is the full subcategory of $\widehat B$ given by the
objects
\begin{displaymath}
e_{0,j},\quad 1\leq j\leq n, \quad j\neq i, \quad \textrm{and} \quad e_{1,i}=\nu_{\widehat B}(e_{0,i}).
\end{displaymath}
Then the quiver $Q_{S^{+}_{i}B}$ of $S^{+}_{i}B$ is the reflection
$\sigma^{+}_{i}Q_{B}$ of $Q_{B}$ at $i$ (see \cite{HW}). Observe
that $\widehat B=\widehat{S^{+}_{i}B}$. By a \emph{reflection
sequences of sinks} of $Q_{B}$ we mean a sequence $i_{1}, \ldots
,i_{t}$ of vertices of $Q_{B}$ such that $i_{s}$ is a sink of
$\sigma^{+}_{i_{s-1}} \ldots \sigma^{+}_{i_{1}}Q_{B}$ for all $s$
in $\{1, \ldots ,t\}$. Moreover, for a sink $i$ of $Q_{B}$, we
denote by $T^{+}_{i}B$ the full subcategory of $\widehat B$ given
by the objects
\begin{displaymath}
e_{0,j},\quad 1\leq j\leq n, \quad \textrm{and} \quad e_{1,i}=\nu_{\widehat B}(e_{0,i}).
\end{displaymath}
Observe that $T^{+}_{i}B$ is the one-point extension $B[I_{B}(i)]$
of $B$ by the indecomposable injective $B$-module $I_{B}(i)$ at
the vertex $i$.  Recall that by a finite dimensional $\widehat
B$-module we mean a contravariant $K$-linear functor $M$ from
$\widehat B$ to the category of $K$-vector spaces such that
$\sum_{x\in{ob\widehat B}}\dim _{K}M(x)$ is finite. We denote by
$\mod \widehat B$ the category of all finite dimensional $\widehat
B$-modules. Finally, for a module $M$ in mod$\widehat B$, we
denote by $\supp (M)$ the full subcategory of $\widehat B$ formed
by all objects $x$ with $M(x)\neq 0$, and call it the
\emph{support of M}.

The following consequences of results proved in \cite{Ho},
\cite{HW} describe the supports of  finite dimensional
indecomposable modules over the repetitive categories $\widehat B$
of tilted algebras $B$ of Dynkin type.

\begin{thm} \label{3.1}
Let $B$ be a tilted algebra of Dynkin type $\Delta$ and $n$ the
rank of $K_{0}(B)$. Then there exists a reflection sequence
$i_{1},\ldots ,i_{n}$ of sinks of $Q_{B}$  such that the following
statements hold.
\begin{enumerate}[\upshape (i)]
\item $S^{+}_{i_{n}}\ldots S^{+}_{i_{1}}B=\nu_{\widehat B}(B)$.
\item For each $r \in \{1,\ldots,n\}$, $S^+_{i_r}\ldots S^+_{i_1}B$ is a
tilted algebra of type $\Delta$.
\item For every indecomposable nonprojective module $M$ in $\mod \widehat B$, $\supp (M)$
is contained in one of the full subcategories of $\widehat B$
\begin{displaymath}
\nu^{m}_{\widehat B}(S^{+}_{i_{r}}\ldots S^{+}_{i_{1}}B),\quad r\in{\{1, \ldots ,n \}},\quad m\in{\mathbb{Z}}.
\end{displaymath}
\item For every indecomposable projective module $P$ in $\mod \widehat B$, $\supp (P)$
 is contained in one of the full subcategories of $\widehat B$
\begin{displaymath}
\nu^{m}_{\widehat B}(T^{+}_{i_{r}}S^{+}_{i_{r-1}}\ldots
S^{+}_{i_{1}}B),\quad r\in{\{1, \ldots ,n \}},\quad
m\in{\mathbb{Z}}.
\end{displaymath}
\end{enumerate}
\end{thm}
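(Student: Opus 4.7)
The plan is to derive the statement by combining Hoshino's theorem on tilted algebras of Dynkin type \cite{Ho} with the reflection machinery for repetitive categories developed by Happel and Waschb\"usch \cite{HW}. The argument is essentially a reformulation of their results tailored to what is needed later in the paper, and would proceed in three stages corresponding to (i), (ii), and (iii)--(iv) together.

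For (i), I first observe that a reflection sequence of sinks always exists by induction: since $B$ is triangular, $Q_B$ is acyclic and admits a sink $i_1$, and each $\sigma^+_{i_{s-1}}\ldots\sigma^+_{i_1}Q_B$ remains acyclic and hence again admits a sink $i_s$. That exactly $n$ such reflections suffice to reach $\nu_{\widehat B}(B)$ is the Dynkin-type content of Hoshino's theorem: combinatorially, the Coxeter transformation of $B$ has finite order, and an appropriately chosen $n$-fold iterated reflection translates each object $e_{0,j}$ of $B\subset \widehat B$ to $e_{1,j}=\nu_{\widehat B}(e_{0,j})$.

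For (ii), I would argue by induction on $r$. The passage $B_{r-1}\mapsto B_r = S^+_{i_r}B_{r-1}$, where $B_{r-1}=S^+_{i_{r-1}}\ldots S^+_{i_1}B$, is realized by the APR-tilt at the projective $B_{r-1}$-module whose top is the simple injective corresponding to the sink $i_r$. Since APR-tilting preserves the property of being tilted of a fixed Dynkin type (see \cite{HW}), each intermediate $B_r$ is itself tilted of type $\Delta$.

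For (iii) and (iv), the key input is the Happel-Waschb\"usch description of $\ind \widehat B$ in terms of the reflection sequence. By (i), the repetitive category $\widehat B$ is covered by the $\nu^m_{\widehat B}$-translates of the fundamental slab stretching from $B=B_0$ to $\nu_{\widehat B}(B)=B_n$, and this slab is stratified by the tilted subcategories $B_r$ interlaced with the one-point extensions $T^+_{i_r}B_{r-1}$. Reflection theory then forces every indecomposable nonprojective module $M\in \mod\widehat B$ to have $\supp(M)$ contained in some $\nu^m_{\widehat B}(B_r)$, while every indecomposable projective $P\in \mod\widehat B$ has $\supp(P)$ contained in some $\nu^m_{\widehat B}(T^+_{i_r}B_{r-1})$. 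The main obstacle is verifying cleanly that no indecomposable support straddles the boundary between two consecutive tilted pieces; this is guaranteed by the finite representation type of each $B_r$ (since $\Delta$ is Dynkin) together with the rigidity of $\tau_{\widehat B}$-orbits under the reflection process, which confines every indecomposable module to a single slice or to a single one-point-extension step.
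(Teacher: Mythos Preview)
The paper does not supply its own proof of this theorem: it is stated there as a direct consequence of the results of Hoshino \cite{Ho} and Hughes--Waschb\"usch \cite{HW}, which is precisely the route you take. Your sketch is therefore in full agreement with the paper's treatment; note only that the reference \cite{HW} is Hughes--Waschb\"usch, not Happel--Waschb\"usch, and that part (i) does not in fact require the Dynkin hypothesis (for any triangular algebra with $n$ simples, reflecting at an admissible sequence of all $n$ sinks shifts $B$ to $\nu_{\widehat B}(B)$ by the very definition of $\widehat B$).
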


Moreover, invoking again \cite{Ho} and \cite{HW}, we have the
below proposition.

\begin{prop}\label{3.2}
Let $B$  be a tilted algebra of Dynkin type and $G$ be an
admissible group of automorphisms  of $\widehat{B}$. Then $G$ is
an infinite cyclic group generated by a strictly positive
automorphism of $\widehat{B}$.
\end{prop}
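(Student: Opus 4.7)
The plan is to produce a canonical shift-degree homomorphism $\delta \colon \mathrm{Aut}(\widehat{B}) \to \mathbb{Z}$, and then use admissibility to show that $G$ embeds via $\delta$ as a nontrivial (hence infinite cyclic) subgroup of $\mathbb{Z}$ generated by an element of strictly positive degree.

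First I would observe that $G$ is necessarily infinite: the object set of $\widehat{B}$ is $\mathbb{Z} \times \{1,\dots,n\}$, which is infinite, and a group acting freely with only finitely many orbits on an infinite set must itself be infinite.

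Second, I would extract from Theorem~\ref{3.1} a canonical $\mathbb{Z}$-layering of $\widehat{B}$. By Theorem~\ref{3.1}(iv), the support of each indecomposable projective $\widehat{B}$-module is contained in exactly one of the subcategories $\nu^{m}_{\widehat{B}}(T^+_{i_r} S^+_{i_{r-1}} \dots S^+_{i_1} B)$, so each object $e_{m,i}$ acquires a canonical layer $m \in \mathbb{Z}$, and $\nu_{\widehat{B}}$ shifts layers by one. Any automorphism $\varphi$ of $\widehat{B}$ sends indecomposable projectives to indecomposable projectives and preserves their supports, and since $Q_B$ is connected (being Dynkin), I would argue that $\varphi$ must translate the layer of every object by the same integer $\delta(\varphi)$. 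This yields a group homomorphism $\delta \colon \mathrm{Aut}(\widehat{B}) \to \mathbb{Z}$ with $\delta(\nu_{\widehat{B}}) = 1$, and $\varphi$ is positive (resp.\ strictly positive) precisely when $\delta(\varphi) \geq 0$ (resp.\ $\delta(\varphi) > 0$).

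Third, I would show that $\delta|_G$ is injective. The kernel $\ker\delta$ consists of rigid automorphisms and is finite, since each such automorphism is determined by a permutation of the $n$ objects of $B_0 \subset \widehat{B}$. Now $\delta(G)$ is nontrivial: otherwise $G$ would act within each finite layer, giving infinitely many orbits. Pick $\psi \in G$ with $\delta(\psi) > 0$; a nontrivial $\varphi \in G \cap \ker\delta$ would have finite order $k > 1$, and combining powers of $\psi$ and $\varphi$ one can produce two distinct elements of $G$ coinciding on some object $e_{0,i}$, contradicting freeness. Hence $\delta|_G$ is injective with nonzero image, so $G$ is infinite cyclic; after possibly replacing its generator by its inverse, the generator has positive $\delta$-value and is therefore strictly positive.

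The main obstacle is the third step — ruling out nontrivial rigid elements of $G$. The Dynkin-type hypothesis enters essentially here, since the argument ultimately rests on the detailed structural results from \cite{Ho} and \cite{HW} on $\mathrm{Aut}(\widehat{B})$ and on the classification of admissible automorphism groups of the translation quiver $\mathbb{Z}\Delta$.
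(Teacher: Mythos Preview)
The paper does not give a proof of this proposition; it simply records it as a consequence of \cite{Ho} and \cite{HW}. So there is no argument in the paper to compare your outline against, and the relevant question is whether your outline stands on its own.

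It does not, and the gap is earlier than you identify. Your step~2 claims that every automorphism $\varphi$ of $\widehat{B}$ translates the layer of \emph{each} object by one and the same integer $\delta(\varphi)$, and you justify this by connectivity of $Q_B$. This is false in general. The exceptional tilted algebras discussed just before Proposition~\ref{3.6} furnish explicit counterexamples: for $B=B(F,T^m_S)$ of type $\mathbb{A}_n$ there is an automorphism $\varrho$ of $\widehat{B}$ with $\varrho^{m}=\nu_{\widehat{B}}$ and $m\geq 2$. If your $\delta$ existed, then $m\,\delta(\varrho)=\delta(\nu_{\widehat{B}})=1$, which is impossible in $\mathbb{Z}$. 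Concretely, such a $\varrho$ sends some $e_{0,i}$ to an object of layer $0$ and another $e_{0,j}$ to an object of layer $1$; connectivity of $Q_B$ only forces adjacent objects to land in layers differing by at most $1$ (because $\widehat{B}$ has nonzero morphisms both within a layer and from layer $m$ to layer $m+1$), not in the same layer. Thus your homomorphism $\delta\colon\mathrm{Aut}(\widehat{B})\to\mathbb{Z}$ does not exist as stated, and the subsequent characterisation of positive/strictly positive automorphisms via the sign of $\delta$ collapses with it.

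Your overall strategy can be repaired, but not cheaply. One option is to replace $\delta$ by the average shift $\varphi\mapsto\frac{1}{n}\sum_i d(i)$, which lands in $\frac{1}{n}\mathbb{Z}\subset\mathbb{Q}$ and does give a homomorphism (using that $\nu_{\widehat{B}}$ is central in $\mathrm{Aut}(\widehat{B})$); another is to pass to the induced action on $\Gamma^{s}_{\widehat{B}}\cong\mathbb{Z}\Delta$ and invoke the known description of admissible automorphism groups of $\mathbb{Z}\Delta$. Either route still requires the structural input from \cite{Ho}, \cite{HW} (and, implicitly, Riedtmann's classification) that you defer to at the end --- in particular to handle the exceptional cases and to exclude nontrivial rigid elements from $G$. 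So your concluding remark is correct in spirit, but the difficulty already bites at step~2, not only at step~3.
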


It follows from Theorem~\ref{3.1} that the repetitive category
$\widehat{B}$ of  a tilted algebra $B$ of Dynkin type is a {\it
locally representation-finite} category \cite{G2}, that is, for
any object $x \in \widehat{B}$ the number of indecomposable
modules $N \in \mod \widehat{B}$ satisfying $N(x) \neq 0$ is
finite. Then we obtain the following consequence
of~\cite[Theorem~3.6]{G2}.

\begin{thm}\label{3.3}
Let $B$ be a tilted algebra of Dynkin type, $G$ an admissible
group of automorphisms of  $\widehat{B}$, and $A=\widehat{B}/G$
the associated selfinjective orbit algebra. Then the following
statements hold.
\begin{enumerate}[\upshape (i)]
\item The push-down functor $F_{\lambda}\colon\mod \widehat{B}\to\mod A$,
associated with the Galois covering
$F\colon\widehat{B}\to\widehat{B}/G=A$, is  dense and preserves
the almost split sequences.
\item The Auslander-Reiten quiver $\Gamma_{A}$ of $A$ is the orbit quiver
$\Gamma_{\widehat{B}}/G$ of $\Gamma_{\widehat{B}}$ with respect to
the induced action of $G$ on $\Gamma_{\widehat{B}}$.
\item $A$ is of finite representation type.
\end{enumerate}
\end{thm}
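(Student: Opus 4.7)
The plan is to deduce all three statements from the already-cited Gabriel covering theorem \cite[Theorem 3.6]{G2}, using as the crucial input the locally representation-finiteness of $\widehat{B}$ noted just before the theorem. So the main preparatory step is to verify that the hypotheses of Gabriel's theorem are satisfied in our setting, and then to read off (i)--(iii) as its consequences.

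First I would record the setup: the functor $F\colon \widehat{B}\to \widehat{B}/G=A$ is a Galois covering with Galois group $G$, because $G$ acts freely on the objects of $\widehat{B}$ and has finitely many orbits (admissibility). Moreover, by Theorem~\ref{3.1}, for every object $x$ of $\widehat{B}$ the indecomposable modules $N\in \mod\widehat{B}$ with $N(x)\neq 0$ lie in the supports described in parts (iii)--(iv) of that theorem, and each such support meets only finitely many $\nu_{\widehat{B}}$-shifted pieces; hence only finitely many indecomposables are supported at $x$. This is exactly the statement that $\widehat{B}$ is locally representation-finite.

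Having secured local representation-finiteness, Gabriel's theorem applies directly and yields statement (i): the push-down functor $F_{\lambda}\colon \mod\widehat{B}\to \mod A$ is dense and preserves almost split sequences. For (ii) I would argue as follows. Density of $F_{\lambda}$ implies that every vertex of $\Gamma_A$ is of the form $F_{\lambda}(M)$ for some indecomposable $M\in\mod\widehat{B}$, while preservation of almost split sequences implies that $F_{\lambda}$ sends irreducible morphisms to irreducible morphisms and is compatible with the translations $\tau$. Since $G$ acts on $\Gamma_{\widehat{B}}$ by translation-quiver automorphisms and two indecomposables of $\mod\widehat{B}$ have isomorphic images under $F_{\lambda}$ exactly when they lie in the same $G$-orbit (a standard consequence of the Galois covering property together with local representation-finiteness), the induced map $\Gamma_{\widehat{B}}/G\to \Gamma_A$ is an isomorphism of translation quivers. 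Finally, (iii) follows from (ii): the quiver $\Gamma_{\widehat{B}}$ has only finitely many $G$-orbits of vertices, because $G$ has finitely many orbits on objects of $\widehat{B}$ and, by local representation-finiteness combined with Theorem~\ref{3.1}, each object is the support base for finitely many indecomposables; therefore $\Gamma_A = \Gamma_{\widehat{B}}/G$ is a finite quiver, so $A$ is of finite representation type.

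The one point that will require care, rather than a pure citation, is the verification that $\widehat{B}$ is genuinely locally representation-finite in the strong sense needed by Gabriel's theorem. I expect this to be the main obstacle: one must use Theorem~\ref{3.1} to control the supports of indecomposables globally on $\widehat{B}$, and in particular to rule out infinitely many indecomposables sitting on a single object. Everything else is a direct translation from Gabriel's general machinery once the local finiteness hypothesis is in place.
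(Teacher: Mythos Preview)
Your proposal is correct and follows exactly the route the paper indicates: the paper gives no proof beyond declaring the theorem a ``consequence of \cite[Theorem~3.6]{G2}'' after noting (via Theorem~\ref{3.1}) that $\widehat{B}$ is locally representation-finite. Your write-up simply fleshes out how (i)--(iii) are read off from Gabriel's theorem, which is precisely what the paper leaves implicit.
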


For a Dynkin quiver $\Delta$ of type $\mathbb{A}_n$ $(n\geq 1)$,
$\mathbb{B}_n$ $(n \geq 2)$, $\mathbb{C}_n$ $(n \geq 3)$,
$\mathbb{D}_n$ $(n \geq 4)$, $\mathbb{E}_6$, $\mathbb{E}_7$,
$\mathbb{E}_8$, $\mathbb{F}_4$ or $\mathbb{G}_2$, by a {\it
selfinjective algebra of type} $\Delta$ we mean an orbit algebra
$\widehat{B}/G$, where $B$ is a tilted algebra of type $\Delta$
and $G$ is an admissible group of automorphisms of $\widehat{B}$.

\begin{prop}\label{3.A}
Let $A$ be a selfinjective algebra of Dynkin type $\Delta
\in\{\mathbb{B}_n, \mathbb{C}_n, \mathbb{F}_4,\mathbb{G}_2\}$.
Then $A$ is isomorphic to the $r$-fold trivial extension algebra
$T(B)^{(r)}$, where $B$ is a tilted algebra of type $\Delta$ and
$r$ is a positive integer.
\end{prop}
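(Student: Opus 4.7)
By the definition of selfinjective algebra of Dynkin type, $A \cong \widehat{B}/G$ for a tilted algebra $B$ of type $\Delta$ and an admissible group $G$ of automorphisms of $\widehat{B}$. By Proposition \ref{3.2}, $G = (\varphi)$ is generated by a strictly positive automorphism $\varphi$. The plan is to prove that, after possibly replacing $\varphi$ by a suitably conjugate generator, $\varphi$ becomes exactly $\nu^{r}_{\widehat{B}}$ for some integer $r \geq 1$; the conclusion $A \cong \widehat{B}/(\nu^{r}_{\widehat{B}}) = T(B)^{(r)}$ will then follow from the definition of the $r$-fold trivial extension.

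The key reduction is to pass $\varphi$ to the Auslander--Reiten quiver. Since the Nakayama automorphism $\nu_{\widehat{B}}$ is intrinsic to the selfinjective structure, $\varphi$ commutes with it and induces an automorphism of $\Gamma_{\widehat{B}}$. By Theorem \ref{3.1}, $\widehat{B}$ is locally representation-finite of Dynkin type; in particular the stable part $\Gamma^{s}_{\widehat{B}}$ is the translation quiver $\mathbb{Z}\Delta$, with its translation corresponding to $\nu_{\widehat{B}}$. Hence $\varphi$ induces a $\tau$-equivariant translation-quiver automorphism $\bar\varphi$ of $\mathbb{Z}\Delta$, and the group of all such automorphisms is the semidirect product $\langle \tau \rangle \rtimes \mathrm{Aut}(\Delta)$, where $\mathrm{Aut}(\Delta)$ denotes the automorphism group of the underlying valued Dynkin diagram.

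The decisive point is that for $\Delta \in \{\mathbb{B}_n, \mathbb{C}_n, \mathbb{F}_4, \mathbb{G}_2\}$ the group $\mathrm{Aut}(\Delta)$ is trivial: each such diagram is a path bearing at least one non-simply-laced bond whose asymmetric valuation pair is one of $(1,2)$, $(2,1)$, $(1,3)$, $(3,1)$, and this asymmetry rigidifies all vertices. Consequently $\bar\varphi = \tau^{r}$ for some integer $r$, and strict positivity of $\varphi$ forces $r \geq 1$. At the level of objects of $\widehat{B}$, $\varphi$ then acts exactly as $\nu^{r}_{\widehat{B}}$. The step I expect to be the main obstacle is lifting this object-level coincidence to an isomorphism of orbit algebras: one must show that the remaining rigid discrepancy $\varphi \circ \nu^{-r}_{\widehat{B}}$ can be absorbed by an automorphism of $\widehat{B}$ which conjugates $G$ onto $(\nu^{r}_{\widehat{B}})$. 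For this I would invoke the rigidity of simply-connected locally representation-finite Dynkin categories provided by the Gabriel covering theory used in \cite{G2} and \cite{HW}, and then conclude $A \cong T(B)^{(r)}$.
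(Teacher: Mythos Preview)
Your argument has a genuine gap at the key step. The observation that $\mathrm{Aut}(\mathbb{Z}\Delta) = \langle\tau\rangle$ for $\Delta \in \{\mathbb{B}_n, \mathbb{C}_n, \mathbb{F}_4, \mathbb{G}_2\}$ is correct, so indeed $\bar\varphi = \tau^{k}$ for some integer $k$. However, your claim that the translation of $\Gamma^{s}_{\widehat{B}} \cong \mathbb{Z}\Delta$ ``corresponds to $\nu_{\widehat{B}}$'' is false: that translation is the Auslander--Reiten translation $\tau_{\widehat{B}}$, not the Nakayama automorphism. The automorphism $\nu_{\widehat{B}}$ itself acts on $\mathbb{Z}\Delta$ as $\tau^{m}$ for some fixed $m>1$ depending on $\Delta$. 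Hence from $\bar\varphi = \tau^{k}$ you cannot conclude that $\varphi$ acts on objects as a power of $\nu_{\widehat{B}}$; you would need $m \mid k$, and nothing in your argument forces this.

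That divisibility is precisely the content of the proposition: it amounts to showing that no tilted algebra $B$ of these types is \emph{exceptional}, i.e.\ that $\widehat{B}$ admits no strictly positive automorphism $\varrho$ with $\varrho^{q} = \nu_{\widehat{B}}$ for some $q \geq 2$. Your use of $\mathrm{Aut}(\Delta)=1$ shows only that such a $\varrho$ would act as a power of $\tau$ on the stable quiver; it does not show that the projective configuration in $\Gamma_{\widehat{B}}$ lacks the extra periodicity that $\varrho$ would require. The paper obtains this by invoking the classification of selfinjective configurations of $\mathbb{Z}\mathbb{A}_{2n-1}$, $\mathbb{Z}\mathbb{D}_{n+1}$, $\mathbb{Z}\mathbb{E}_6$, $\mathbb{Z}\mathbb{D}_4$ from \cite{Rd2}, \cite{Rd3}, \cite{BLR}, together with \cite{YX} and \cite{IT}. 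Note also that the paper's subsequent statement restricting exceptional tilted algebras to types $\mathbb{A}_n$ and $\mathbb{D}_{3m}$ is \emph{derived from} Proposition~\ref{3.A}, so it cannot be quoted here. The lifting issue you flag as the main obstacle is in fact minor compared to this combinatorial gap.
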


\begin{proof}
For $\Delta$ of type $\mathbb{B}_n$ and $\mathbb{C}_n$ this
follows from \cite[Theorems 1.2, 1.3 and 4.1]{YX}. We note that
the proofs of these results presented in \cite{YX} apply
essentially combinatorial characterizations of finite
Auslander-Reiten quivers of algebras, established in \cite{IT},
and the classification of selfinjective configurations of the
stable translation quivers $\mathbb{ZD}_{n+1}$ and
$\mathbb{ZA}_{2n-1}$ given in \cite{Rd2}, \cite{Rd3}. For $\Delta=
\mathbb{F}_4$ the claim follows by repeating arguments applied in
\cite{YX} for types $\mathbb{B}_n$ and $\mathbb{C}_n$ and the
classification of selfinjective configurations of the stable
translation quiver $\mathbb{ZE}_6$ established in \cite{BLR}.
Similarly, for $\Delta=\mathbb{G}_2$, the proof reduces to the
classification of selfinjective configurations of the stable
translation quiver $\mathbb{ZD}_4$ (see \cite[(7.6)]{BLR}).
\end{proof}

Let  $B$ be a tilted algebra of Dynkin type $\Delta$ and $n$ the
rank of $K_0(B)$. Following \cite{S}, $B$ is said to be {\it
exceptional} if there exists a reflection sequence $i_1, \ldots,
i_t$ of sinks in $Q_B$ with $t< n$ and $S^+_{i_t} \ldots
S^+_{i_1}B$ isomorphic to $B$. Equivalently, $B$ is exceptional if
and only if there is an automorphism $\varphi$ of $\widehat{B}$
with $\varphi^m=\nu_{\widehat{B}}$ for some $m\geq 2$ (see
\cite[Proposition 3.9]{S}). We have the following consequence of
\cite[Proposition 1.4 and 1.5]{BLR}, \cite[Proposition 3.3]{Rd2},
\cite[Theorem]{Rd3} and Proposition~\ref{3.A}).
\begin{prop}
Let $B$ be an exceptional tilted algebra of Dynkin type $\Delta$.
Then either $\Delta=\mathbb{A}_n$ for some $n\geq 2$ or
$\Delta=\mathbb{D}_{3m}$ for some $m \geq 2$.
\end{prop}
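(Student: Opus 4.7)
The proposition is a direct consequence of the cited classifications, and the plan is to unpack it case-by-case on the Dynkin type $\Delta$. Fix an automorphism $\varphi\in\mathrm{Aut}(\widehat B)$ with $\varphi^{m}=\nu_{\widehat B}$ for some $m\geq 2$. Since $\nu_{\widehat B}$ acts freely on the objects of $\widehat B$ with finitely many orbits, so does $\varphi$: any fixed point of a nontrivial power of $\varphi$ would give a fixed point of a nontrivial power of $\nu_{\widehat B}$. Hence $(\varphi)$ is an admissible infinite cyclic group of automorphisms of $\widehat B$, and Proposition~\ref{3.2} forces $\varphi$ to be strictly positive. Consequently $A:=\widehat B/(\varphi)$ is a selfinjective algebra of Dynkin type $\Delta$ by Theorem~\ref{3.3}.

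For $\Delta\in\{\mathbb{B}_n,\mathbb{C}_n,\mathbb{F}_4,\mathbb{G}_2\}$, Proposition~\ref{3.A} gives $A\cong T(B')^{(r)}=\widehat{B'}/(\nu^{r}_{\widehat{B'}})$ for some tilted algebra $B'$ of type $\Delta$ and some $r\geq 1$. Since tilted algebras of a common Dynkin type have isomorphic repetitive categories (both are, up to isomorphism of locally bounded $K$-categories, the simply connected Galois cover of $A$), we may identify $\widehat B\cong\widehat{B'}$ via some isomorphism $\eta$. By uniqueness of the universal Galois covering, the two Galois groups $(\varphi)$ and $(\nu^{r}_{\widehat{B'}})$ must be conjugate subgroups of $\mathrm{Aut}(\widehat B)$, so after replacing $\eta$ appropriately we have $\eta\varphi\eta^{-1}=\nu_{\widehat{B'}}^{\pm r}$. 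Naturality of the Nakayama automorphism gives $\eta\nu_{\widehat B}\eta^{-1}=\nu_{\widehat{B'}}$, and conjugating $\varphi^{m}=\nu_{\widehat B}$ by $\eta$ then yields $\nu_{\widehat{B'}}^{\pm rm}=\nu_{\widehat{B'}}$, forcing $rm=\pm 1$, a contradiction.

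For the simply-laced types I would invoke the classifications of selfinjective configurations of $\mathbb{Z}\Delta$: \cite[Propositions~1.4 and~1.5]{BLR} for $\Delta\in\{\mathbb{E}_6,\mathbb{E}_7,\mathbb{E}_8\}$ (together with \cite[(7.6)]{BLR} handling $\mathbb{D}_4$), \cite[Proposition~3.3]{Rd2} for $\mathbb{D}_n$ with $n\geq 5$, and \cite[Theorem]{Rd3} for $\mathbb{A}_n$. Direct inspection of these lists shows that: no admissible cyclic group of automorphisms of $\widehat B$ has a generator whose nontrivial $m$-th root equals $\nu_{\widehat B}$ in types $\mathbb{E}_6,\mathbb{E}_7,\mathbb{E}_8$ or $\mathbb{D}_4$; in type $\mathbb{D}_n$ with $n\geq 5$ such a root exists precisely when $3\mid n$, giving $\Delta=\mathbb{D}_{3m'}$ with $m'\geq 2$; such roots do occur for $\mathbb{A}_n$ with $n\geq 2$, while $\mathbb{A}_1$ is excluded at once since $\mathrm{Aut}(\widehat K)$ is generated by $\nu_{\widehat K}$.

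The main obstacle I expect is the identification step in the second paragraph: concluding that $(\varphi)$ and $(\nu^{r}_{\widehat{B'}})$ are conjugate subgroups of $\mathrm{Aut}(\widehat B)$ relies on the uniqueness, up to isomorphism, of the simply connected Galois covering of a selfinjective algebra of Dynkin type, a fact standard in the theory of locally representation-finite $K$-categories (\cite{G2}) but whose precise invocation requires care. By contrast, the combinatorial verifications in the third paragraph, while essential, reduce to reading off the explicit tables of selfinjective configurations in the cited references.
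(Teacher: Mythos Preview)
Your approach coincides with the paper's: the proposition is stated there as a direct consequence of \cite[Propositions~1.4 and~1.5]{BLR}, \cite[Proposition~3.3]{Rd2}, \cite[Theorem]{Rd3}, and Proposition~\ref{3.A}, with no further argument given, and your write-up is a legitimate unpacking of that outline (including the delicate conjugacy step you flag, which is indeed the place where the covering theory of \cite{G2} has to be invoked carefully). One bibliographic slip to fix in your third paragraph: you have interchanged the two Riedtmann references --- \cite{Rd2} is the paper on class $\mathbb{A}_n$ and \cite{Rd3} is the one on class $\mathbb{D}_n$, so the citation for the $\mathbb{D}_n$ configurations should be \cite{Rd3} and that for $\mathbb{A}_n$ should be \cite{Rd2}.
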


We introduce now exceptional tilted algebras of Dynkin type which
play a prominent role in the description of selfinjective algebras
of Dynkin types $\mathbb{A}_n$ $(n \geq 1)$, $\mathbb{D}_n$
$(n\geq 4)$, $\mathbb{E}_6$, $\mathbb{E}_7$, $\mathbb{E}_8$ (see
\cite[Section 3]{S} for details). Let $F$ be a division algebra
over a field $K$.

Let $T^m_S$ be a Brauer tree with $n$ edges and an exceptional
vertex $S$ of multiplicity $m \geq 2$. Then the Brauer tree
algebra $A(F, T^m_S)$ of $T^m_S$ over $F$ is a symmetric algebra
of Dynkin type $\mathbb{A}_n$, isomorphic to the orbit algebra
$\widehat{B(F, T^m_S)}/ (\varphi)$, for an exceptional  tilted
algebra $B(F, T^m_S)$ of type $\mathbb{A}_n$ and an automorphism
$\varphi$ of $\widehat{B(F, T^m_S)}$ with
$\varphi^m=\nu_{\widehat{B(F,T^m_S)}}$.

Let $T^2_S$ be a Brauer tree with $m \geq 2$ edges and an extreme
exceptional vertex $S$ of multiplicity $2$. Then the modified
Brauer tree algebra $D(F, T^2_S)$ of $T^2_S$ over $F$ (in the
sense of \cite{Rd3}, \cite{W2}) is  a symmetric algebra of Dynkin
type $\mathbb{D}_{3m}$, isomorphic to the orbit algebra
$\widehat{B^*(F, T^2_S)}/(\varphi)$, for an exceptional tilted
algebra $B^*(F, T^2_S)$ of type $\mathbb{D}_{3m}$ and an
automorphism $\varphi$ of $\widehat{B^*(F, T^2_S)}$ with
$\varphi^3=\nu_{\widehat{B^*(F, T^2_S)}}$.

Then we have the following general version of \cite[Theorem
3.10]{S}.
\begin{prop} \label{3.6}
Let $A$ be a selfinjective algebra of Dynkin type\\ $\Delta \in
\{\mathbb{A}_n, \mathbb{D}_n, \mathbb{E}_6, \mathbb{E}_7,
\mathbb{E}_8\}$. Then $A$ is isomorphic to an orbit algebra of one
of the forms:
\begin{enumerate}[\upshape (1)]
\item $\widehat{B}/(\nu^r_{\widehat{B}})$, $r\geq 1$, $B$ a tilted
algebra of type $\Delta \in \{\mathbb{A}_n, \mathbb{D}_n,
\mathbb{E}_6, \mathbb{E}_7, \mathbb{E}_8\}$;
\item $\widehat{B}/(\sigma\nu^r_{\widehat{B}})$, $r\geq 1$, $B$ a tilted
algebra of type $\Delta \in \{\mathbb{A}_{2p+1}, \mathbb{D}_n,
\mathbb{E}_6\}$, $\sigma$ an automorphism of $\widehat{B}$ of
order $2$;
\item $\widehat{B}/(\sigma\nu^r_{\widehat{B}})$, $r\geq 1$, $B$ a
hereditary algebra with $Q_B$ of the form \xymatrixrowsep{0.6cm}
\xymatrixcolsep{0.4cm}
$$\xymatrix{2 \ar[rd] & 3 \ar[d] & 4 \ar[ld]\\
& 1 &},$$ $\sigma$ an automorphism of $\widehat{B}$ of order $3$;
\item $\widehat{B}/(\varrho^r)$, $r\geq 1$, $B=B(F, T^m_S)$ the
tilted algebra of type $\mathbb{A}_n$ (introduced above),
$\varrho$ an automorphism of $\widehat{B}$ with
$\varrho^m=\nu_{\widehat{B}}$;
\item $\widehat{B}/(\varrho^r)$, $r\geq 1$, $B=B^*(F, T^2_S)$ the
tilted algebra of type $\mathbb{D}_{3m}$ (introduced above),
$\varrho$ an automorphism of $\widehat{B}$ with
$\varrho^3=\nu_{\widehat{B}}$.
\end{enumerate}
\end{prop}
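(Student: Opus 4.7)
The plan is to invoke Proposition~\ref{3.2} as a first step: since $A$ is of Dynkin type $\Delta$, we may write $A\cong\widehat{B}/(\psi)$ for some tilted algebra $B$ of type $\Delta$ over $K$ and some strictly positive automorphism $\psi$ of $\widehat{B}$. The task is then to classify such pairs $(B,\psi)$ up to the equivalence $\widehat{B}/(\psi)\cong\widehat{B'}/(\psi')$, matching them against the five canonical forms (1)--(5).

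The key structural observation is that any positive automorphism $\psi$ commutes with $\nu_{\widehat{B}}$ on objects, so that $\psi(e_{m,i})=e_{m+a_i,\sigma(i)}$ for a permutation $\sigma$ of $\{1,\ldots,n\}$ (with $n=\rk K_0(B)$) and nonnegative integers $a_i$. Admissibility of $(\psi)$ forces $\sigma$ to have finite order $d$ and $\psi^d=\nu^s_{\widehat{B}}$ for some $s\geq 1$. I would first dispatch the non-exceptional case, where the $a_i$ can be shown to all equal a common value $r=s/d$, so $\psi=\sigma_0\nu^r_{\widehat{B}}$ with $\sigma_0$ rigid of order $d$. Then $\sigma_0$ induces a graph automorphism of $Q_B$, hence of the underlying Dynkin graph $\Delta$. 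For $\Delta\in\{\mathbb{A}_n,\mathbb{D}_n,\mathbb{E}_6,\mathbb{E}_7,\mathbb{E}_8\}$ such graph automorphisms have order dividing $2$, except for the triality of $\mathbb{D}_4$. This yields case (1) when $d=1$, case (2) when $d=2$ (which forces $\Delta\in\{\mathbb{A}_{2p+1},\mathbb{D}_n,\mathbb{E}_6\}$), and case (3) when $d=3$ (which forces $\Delta=\mathbb{D}_4$ and $Q_B$ to be the displayed triality quiver).

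For the exceptional case, when no such decomposition $\psi=\sigma_0\nu^r_{\widehat{B}}$ with $\sigma_0$ rigid is available, I would invoke the proposition immediately preceding the statement: up to replacing $B$ by some reflection $S^+_{i_t}\cdots S^+_{i_1}B$, which has the same repetitive category by Theorem~\ref{3.1} and is again tilted of type $\Delta$, we must have $\Delta=\mathbb{A}_n$ for some $n\geq 2$ or $\Delta=\mathbb{D}_{3m}$ for some $m\geq 2$. Combining the combinatorial classifications of selfinjective configurations of $\mathbb{Z}\mathbb{A}_n$ and $\mathbb{Z}\mathbb{D}_{3m}$ from \cite{BLR}, \cite{Rd2}, \cite{Rd3} with the field-theoretic versions of the Brauer tree and modified Brauer tree algebras described above, I can identify $B$ either with $B(F,T^m_S)$, yielding case (4) with $\psi=\varrho^r$ for the automorphism $\varrho$ satisfying $\varrho^m=\nu_{\widehat{B}}$, or with $B^*(F,T^2_S)$, yielding case (5) with $\varrho^3=\nu_{\widehat{B}}$.

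The main obstacle is extending the exceptional classification from the algebraically closed setting of \cite{BLR}, \cite{Rd2}, \cite{Rd3} to an arbitrary ground field $K$. One must show that the group $\mathrm{Aut}(\widehat{B})/\langle\nu_{\widehat{B}}\rangle$ and the admissibility criterion depend only on the underlying combinatorial (quiver and Brauer tree) data, with the division $K$-algebra $F=\End_B(S)$ entering merely as a parameter in the normal forms $B(F,T^m_S)$ and $B^*(F,T^2_S)$. Granting this, precisely the five canonical forms are realized, proving the proposition.
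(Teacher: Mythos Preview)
The paper does not supply its own proof of Proposition~\ref{3.6}; it is stated without argument as ``the following general version of \cite[Theorem~3.10]{S}'', relying on the classification results of \cite{BLR}, \cite{Rd2}, \cite{Rd3}, \cite{S}. So there is no in-paper proof to compare against, and your proposal should be read as an attempt to reconstruct a proof from the cited literature.

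That said, your sketch has a concrete gap. You write that a rigid automorphism $\sigma_0$ of $\widehat{B}$ ``induces a graph automorphism of $Q_B$, hence of the underlying Dynkin graph $\Delta$''. But $B$ is a \emph{tilted} algebra of type $\Delta$, and its ordinary quiver $Q_B$ is in general not a Dynkin quiver at all; its underlying graph need not be $\Delta$. What one actually uses is that $\sigma_0$ acts on the stable Auslander--Reiten quiver $\Gamma^s_{\widehat{B}}\cong\mathbb{Z}\Delta$, and the automorphism group of $\mathbb{Z}\Delta$ is known (generated by $\tau$ and graph automorphisms of $\Delta$); the argument then proceeds via the classification of admissible automorphism groups of $\mathbb{Z}\Delta$ together with the corresponding selfinjective configurations, not via $Q_B$.

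This error also undermines your case analysis. You claim that $d=2$ forces $\Delta\in\{\mathbb{A}_{2p+1},\mathbb{D}_n,\mathbb{E}_6\}$ because these are the Dynkin graphs with an order-$2$ automorphism. But every $\mathbb{A}_n$ with $n\geq 2$ has an order-$2$ graph automorphism (the flip), so your criterion does not separate $\mathbb{A}_{2p+1}$ from $\mathbb{A}_{2p}$. The exclusion of $\mathbb{A}_{2p}$ from case~(2) comes instead from the finer analysis of admissible automorphisms of $\mathbb{Z}\mathbb{A}_n$ and the associated configurations in \cite{Rd2}: for even $n$ the relevant involutive twist is absorbed into the exceptional Brauer-tree family~(4) rather than appearing as $\sigma\nu^r_{\widehat{B}}$ with $\sigma$ rigid of order~$2$. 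Finally, the obstacle you flag at the end --- passing from algebraically closed $K$ to arbitrary $K$ --- is the genuine content that distinguishes this proposition from \cite[Theorem~3.10]{S}, and ``granting this'' is not a proof.
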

We are now in the position to prove the following proposition,
forming an essential step in the proof of the main theorem.
\begin{thm}\label{3.4}
Let $B$ be a tilted algebra of Dynkin type $\Delta$, $G$ an
admissible group of automorphisms of $\widehat{B}$, and
$A=\widehat{B}/ G$ the associated orbit algebra. The following
statements are equivalent.
\begin{enumerate}[\upshape (i)]
\item $\mod A$ has no short cycles.
\item $G= (\varphi \nu^2_{\widehat{B}})$ for a strictly positive  automorphism
$\varphi$ of $\widehat{B}$.
\end{enumerate}
\end{thm}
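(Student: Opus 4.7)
The strategy combines the push-down functor machinery of Theorem~\ref{3.3} and the support description of Theorem~\ref{3.1} (for the direct implication) with Lemma~\ref{4.A}, Theorem~\ref{4.4}, and the classification in Proposition~\ref{3.6} (for the converse).

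For (ii)$\Rightarrow$(i), assume $G=(\varphi\nu^{2}_{\widehat B})$ with $\varphi$ strictly positive, so its generator $g_{0}$ moves every object $e_{m,i}$ to some $e_{p,j}$ with $p\geq m+2$ and $p>m+2$ at at least one place. Suppose toward a contradiction that $\mod A$ contains a short cycle $\bar X\to \bar Y\to \bar X$ with $\bar X\not\cong \bar Y$. By Theorem~\ref{3.3}(i) we may lift to indecomposable modules $X,Y\in\mod\widehat B$. The Galois-covering identity
$$\Hom_{A}(F_{\lambda}X,F_{\lambda}Y)=\bigoplus_{g\in G}\Hom_{\widehat B}(X,gY)$$
yields nonzero morphisms $X\to g_{1}Y$ and $Y\to g_{2}X$ in $\mod\widehat B$ for some $g_{1},g_{2}\in G$, and translating the second by $g_{1}$ produces a pair of nonzero morphisms $X\to g_{1}Y\to g_{1}g_{2}X$. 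By Theorem~\ref{3.1} every indecomposable in $\mod\widehat B$ has support inside a single tilted-algebra piece of $\widehat B$ confined to one $\nu$-period, so nonzero $\Hom_{\widehat B}$-spaces are possible only when the supports lie within at most one $\nu$-period of each other; since $g_{0}$ shifts by at least two $\nu$-periods, compatibility of both arrows with the support constraints forces $g_{1}g_{2}=\id_{\widehat B}$, i.e.\ $g_{2}=g_{1}^{-1}$. The resulting pair of nonzero morphisms $X\to g_{1}Y\to X$ in $\mod\widehat B$, together with the acyclicity of the stable Auslander--Reiten quiver $\mathbb Z\Delta$ of $\widehat B$, forces $X\cong g_{1}Y$, whence $\bar X=F_{\lambda}X=F_{\lambda}(g_{1}Y)=F_{\lambda}Y=\bar Y$, a contradiction.

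For (i)$\Rightarrow$(ii), suppose $\mod A$ has no short cycles; by Proposition~\ref{3.2} we may write $G=(\psi)$ for some strictly positive automorphism $\psi$ of $\widehat B$, and we must show $\psi\nu^{-2}_{\widehat B}$ is positive. The Nakayama case is handled by Proposition~\ref{nowe4.1}, so assume $A$ is non-Nakayama. Theorem~\ref{4.1} then produces a semiregular stable slice of $\Gamma_{A}$, which is double $\tau_{A}$-rigid by Lemma~\ref{4.A}, so Theorem~\ref{4.4} furnishes a tilted algebra $B'$ of Dynkin type and a positive automorphism $\psi'$ of $\widehat{B'}$ such that $A$ is socle equivalent to $\widehat{B'}/(\psi'\nu_{\widehat{B'}})$. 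Comparing the stable Auslander--Reiten quivers $\mathbb Z\Delta/G$ of both presentations identifies $\widehat B$ with $\widehat{B'}$ and produces the identity $\psi=\psi'\nu_{\widehat B}$. It remains to promote the merely positive $\psi'$ to a strictly positive one; this is done by enumerating, with the help of the classification in Proposition~\ref{3.6} (and Proposition~\ref{3.A} in the non-simply-laced cases), the residual low-shift alternatives for $\psi'$ and exhibiting in each such case an explicit short cycle in $\mod A$ between two indecomposable projective modules identified by the group action, in the spirit of the final paragraph of the proof of Proposition~\ref{nowe4.1}.

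The main obstacle is the last step of (i)$\Rightarrow$(ii): Theorem~\ref{4.4} by itself produces a socle equivalence with $\psi'$ merely positive rather than strictly positive, so one must separately rule out each boundary possibility $\psi'\in\{\id_{\widehat B},\sigma,\varrho^{k}\}$, where $\sigma$ is a rigid automorphism of finite order and $\varrho$ is a proper root of $\nu_{\widehat B}$. The standard families (1)--(3) of Proposition~\ref{3.6} yield to a direct length-count between indecomposable projectives, but the exceptional families (4) and (5), arising from (modified) Brauer-tree algebras, require a more delicate analysis of the $\varrho$-orbits on $\mathbb Z\mathbb A_{n}$ and $\mathbb Z\mathbb D_{3m}$ in order to locate the short cycle exactly at the threshold where $\psi'\nu^{-1}_{\widehat B}$ ceases to be positive.
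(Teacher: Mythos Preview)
Your proposal has genuine gaps in both directions.

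\textbf{On (ii)$\Rightarrow$(i).} The sentence ``compatibility of both arrows with the support constraints forces $g_{1}g_{2}=\id_{\widehat B}$'' is precisely the crux, and it does not follow from the coarse estimate you give. Writing $g_{1}g_{2}=g_{0}^{k}$, the support bookkeeping you sketch only yields that $g_{0}^{k}X$ lies within two $\nu$-periods of $X$, while $g_{0}$ shifts by \emph{at least} two periods; the boundary case $k=1$ survives. The paper rules this case out by a two-part analysis: when $X$ is nonprojective one can normalise $\supp(X)\subseteq B$ so that $\supp(g_{0}X)$ sits in periods $\geq 2$ while the intermediate $Z=g_{1}Y$ is trapped in periods $0$ and $1$, giving disjointness; but when $X$ is projective, $Z$ can reach the single object $e_{2,i_{1}}$, and eliminating this overlap requires tracking how the \emph{strict} positivity of $\varphi$ moves $P(1,i_{1})$ under $g_{0}$. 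Your argument uses strict positivity only to write ``$p>m+2$ at at least one place'', which is not where it is actually needed. A minor additional point: the definition of short cycle does not require $\bar X\not\cong\bar Y$, so assuming this at the outset loses a case.

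\textbf{On (i)$\Rightarrow$(ii).} The detour through Theorem~\ref{4.4} is unnecessary and introduces an unjustified step. The hypothesis of Theorem~\ref{3.4} already presents $A$ as $\widehat B/G$, so invoking the deforming-ideal machinery to re-obtain an orbit-algebra description is redundant; worse, Theorem~\ref{4.4} yields only a \emph{socle equivalence} $A\simeq_{\soc}\widehat{B'}/(\psi'\nu_{\widehat{B'}})$, and your claim that ``comparing the stable Auslander--Reiten quivers identifies $\widehat B$ with $\widehat{B'}$ and produces $\psi=\psi'\nu_{\widehat B}$'' is not substantiated (different tilted algebras can share a repetitive category, and matching generators across a socle equivalence is delicate). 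The paper proceeds directly by the contrapositive: using Proposition~\ref{3.A} and the classification in Proposition~\ref{3.6}, one lists the finitely many shapes a generator $g$ of $G$ can have when it is \emph{not} of the form $\varphi\nu_{\widehat B}^{2}$ with $\varphi$ strictly positive, and in each case (rigid twists of $\nu_{\widehat B}$ or $\nu_{\widehat B}^{2}$; roots of $\nu_{\widehat B}$ in the exceptional Brauer-tree and modified Brauer-tree families) exhibits an explicit short cycle of indecomposable projectives by reading off a suitable oriented cycle in the quiver of the orbit algebra. You do eventually arrive at this case analysis in your final paragraph, but the preceding reduction is both superfluous and incomplete.
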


\begin{proof}
Let $n$ be the rank of $K_0(B)$ and $i_1, \ldots, i_n$ a
reflection sequence of sinks in $Q_B$ satisfying the statements of
Theorem \ref{3.1}. Applying Proposition \ref{3.2} we may choose a
strictly positive automorphism $g$ of $\widehat{B}$ generating
$G$. Moreover, let $F_{\lambda}\colon\mod \widehat{B}\to\mod A$ be
the push-down functor associated with the Galois covering
$F:\widehat{B} \to \widehat{B}/G=A$. Further, let $e_1, \ldots,
e_n$ be  pairwise orthogonal primitive idempotents of $B$ such
that $1_B=e_1+ \ldots e_n$. Then $e_{m,i}$, $(m,i) \in
\mathbb{Z}\times \{1, \ldots, n\}$, form the set of objects of
$\widehat{B}$. For each $(m,i) \in \mathbb{Z}\times \{1, \ldots,
n\}$, we denote by $P(m,i)$ the indecomposable projective module
$\Hom_{\widehat{B}}(-, e_{m,i})$ in $\mod \widehat{B}$.

We prove first that (ii) implies (i). Assume that
$g=\varphi\nu^2_{\widehat{B}}$ for a strictly positive
automorphism $\varphi$ of $\widehat{B}$. Suppose that there is a
short cycle
$$\xymatrix{M \ar[r]^u & N \ar[r]^v  & M}$$
in $\mod A$. It follows from Theorem \ref{3.3} that there exist
indecomposable modules $X$ and $Y$ in $\mod \widehat{B}$ such that
$M=F_{\lambda}(X)$ and $N=F_{\lambda}(Y)$. Further, since
$F_{\lambda}: \mod \widehat{B} \to \mod A$ is a Galois covering of
module categories (see \cite[Theorem 3.6]{G2}) the functor
$F_{\lambda}$ induces a $K$-linear isomorphism
$$\xymatrix{\bigoplus_{p\in \mathbb{Z}} \Hom_{\widehat{B}}(X,
g^pY)\ar[r]^(.49){\sim}& \Hom_A(F_{\lambda}(X),
F_{\lambda}(Y))}.$$ Hence, there is a nonzero nonisomorphism $f: X
\to g^p Y$ for some $p \in \mathbb{Z}$. Let $Z=g^pY$. Clearly, we
have $F_{\lambda}(Z)=F_{\lambda}(g^pY)=F_{\lambda}(Y)=N$. The
functor $F_{\lambda}$ induces also a $K$-linear isomorphism
$$\xymatrix{\bigoplus_{s\in \mathbb{Z}} \Hom_{\widehat{B}}(Z,
g^sX)\ar[r]^(.49){\sim}& \Hom_A(F_{\lambda}(Z),
F_{\lambda}(X))},$$ and hence there is a nonzero nonisomorphism
$h: Z \colon\rightarrow g^sX$ for some $s \in \mathbb{Z}$. We note
that $\Gamma_{\widehat{B}}$ is an acyclic quiver whose stable part
$\Gamma^s_{\widehat{B}}$ is isomorphic to $\mathbb{Z}\Delta$.
Moreover, since $\widehat{B}$ is locally representation-finite,
every nonzero nonisomorphism between indecomposable modules in
$\mod \widehat{B}$ is a sum of compositions of irreducible
homomorphisms between indecomposable modules in $\mod
\widehat{B}$. Then we conclude that $s \geq 1$. We have two cases
to consider.

Assume first that $X$ is not projective. It follows from Theorem
\ref{3.1} that $\supp (X)$ is contained in the full subcategory of
$\widehat{B}$ of the form $\nu_{\widehat{B}}^m(S^+_{i_r}\ldots
S^+_{i_1}B)$, for some $r \in \{1, \ldots, n\}$ and $m \in
\mathbb{Z}$. Since $B'=\nu^m_{\widehat{B}}(S^+_{i_r}\ldots
S^+_{i_1}B)$ is a tilted algebra of Dynkin type $\Delta$ with
$\widehat{B'}=\widehat{B}$, we may assume that $\supp(X)$ is
contained in $B$. We note that $\Hom_{\widehat{B}}(X,Z)\neq 0$
implies that the supports $\supp (X)$ and $\supp(Z)$ are not
disjoint. Then, applying Theorem \ref{3.1} again, we conclude that
$\supp(Z)$ is contained in a full subcategory of $\widehat{B}$ of
the form $T^+_{i_k}S^+_{i_{k-1}}\ldots S^+_{i_1}B$ for some $k \in
\{1, \ldots, n\}$. On the other hand, since
$g=\varphi\nu^2_{\widehat{B}}$ with $\varphi$ strictly positive
automorphism of $\widehat{B}$ and $s\geq 1$, $\supp(g^sX)$ is
contained in the full subcategory of $\widehat{B}$ given by  the
objects $e_{m,i}$ for all $(m,i) \in \mathbb{Z}\times
\{1,\ldots,n\}$ with $m\geq 2$. Hence, the supports $\supp(Z)$ and
$\supp (g^sX)$ are disjoint, and consequently
$\Hom_{\widehat{B}}(Z,g^sX) =0$, a contradiction.

Finally, assume that $X$ is projective. Then, by Theorem
\ref{3.1}, $\supp(X)$ is contained in a full subcategory of
$\widehat{B}$ of the form
$\nu^m_{\widehat{B}}(T^+_{i_r}S^+_{i_{r-1}}\ldots S^+_{i_1}B)$ for
some $r \in \{1, \ldots, n\}$ and $m \in \mathbb{Z}$. We may
assume that $\supp(X)$ is contained in $T^+_{i_1}B$ and hence
$X=P(1,i_1)$. Since there is a nonzero nonisomorphism $f:X
\rightarrow Z$, we conclude that $\Hom_{\widehat{B}}(X/\soc(X),
Z)\neq 0$. Observe also that $\supp(X/\soc(X))$ is contained in
$S^+_{i_1}B$, which is the full subcategory of $\widehat{B}$ with
the objects $e_{1,i_1}$ and $e_{0,i}$, for $i \in \{1, \ldots,
n\}\backslash\{i_1\}$. Then we obtain that $\supp(Z)$ is contained
in the full subcategory of $\widehat{B}$ given by the objects of
$S^+_{i_1}B$ and $\nu_{\widehat{B}}(S^+_{i_1}B)$, so by the
objects $e_{1,i_1}, e_{2,i_1}$, and $e_{0,i}, e_{1,i}$ for $i \in
\{1, \ldots, n\}\backslash \{i_1\}$. Observe also that
$\supp(g^sX)=\supp(g^sP(1,i_1))$ is contained in the full
subcategory of $\widehat{B}$ given by the objects $g^s(e_{1,i_1})$
and $g^s(e_{0,i})$ with $i \in \{1, \ldots, n\}$. Clearly,
$g^s(e_{1,i_1})=e_{k,j}$ for some $k\geq 3$ and $j \in \{1,\ldots,
n \}$, because $g=\varphi\nu^2_{\widehat{B}}$ and $s\geq 1$.
Similarly, for any $i \in \{1, \ldots, n\}$, we have
$g^s(e_{0,i})=e_{k,j(i)}$, for some $k\geq 2$ and $j(i)\in \{1,
\ldots, n \}$. Since $\Hom_{\widehat{B}}(Z, g^sX)\neq 0$, we
conclude that the supports $\supp(Z)$ and $\supp(g^sX)$ have a
common object. Then $e_{2,i_1}$ is the unique common object of
$\supp(Z)$ and $\supp(g^sX)$. But then $s=1$ and
$g(e_{0,i})=e_{2,i_1}$ for some $i \in \{1, \ldots, n\}\backslash
\{i_1\} $. Hence
$\varphi(e_{2,i})=\varphi(\nu^2_{\widehat{B}}(e_{0,i}))=g(e_{0,i})=e_{2,
i_1 }$. Applying $\nu^{-1}_{\widehat{B}}$ to this equality we
obtain that $\varphi(e_{1,i})=e_{1,i_1}$, and consequently
$\varphi P(1,i)=P(1,i_1)$. Since $i=i_t$ for some $t \in \{2,
\ldots, n\}$, if $\varphi P(1, i_1)=P(1,j)$ for some $j \in \{1,
\ldots, n \}$, then $j=i_s$ for some $s \in \{2, \ldots, n\}$ and
$s\neq t$, because $\varphi$ induces a strictly positive
automorphism of the acyclic Auslander-Reiten quiver
$\Gamma_{\widehat{B}}$. Then $gX=gP(1,
i_1)=\varphi\nu_{\widehat{B}}^2P(1,i_1)=P(3,j)$ and $\supp(gX)\cap
\supp(Z) = \supp(P(3,j))\cap \supp (Z)$ contains $e_{2,j}$ because
$\Hom_{\widehat{B}}(Z, gX) \neq 0$. Therefore $e_{2,j}=e_{2,i_1}$,
a contradiction. Similarly, if $\varphi P(1, i_1)=P(k,j)$ for some
$k \geq 2$ and $j\in \{1,\ldots,n\}$, then $gX
=gP(1,i_1)=\varphi\nu_{\widehat{B}}^2P(1,i_1)=P(k,j)$ for some $k
\geq 4$. Hence, the supports $\supp(Z)$ and $\supp (gX)$ are
disjoint, and consequently $\Hom_{\widehat{B}}(Z,gX) =0$, a
contradiction. Summing up, we have proved that (ii) implies (i).

We shall now prove that (i) implies (ii). Assume that $g$ is not
of the form $\varphi\nu^2_{\widehat{B}}$ for a strictly positive
automorphism $\varphi$ of $\widehat{B}$. We will show that $\mod
A$ contains a short cycle. Recall that $g$ is a strictly positive
automorphism of $\widehat{B}$. We have few cases to consider,
taking into account Proposition \ref{3.6}.

\textbf{(a)} Assume that $g=\sigma\nu_{\widehat{B}}$ or $g=\sigma
\nu^2_{\widehat{B}}$ for a rigid automorphism $\sigma$ of
$\widehat{B}$, that is, $A=\widehat{B}/G$ is as in Proposition
\ref{3.A} with $r \in \{1,2\}$, or is one of the forms (1), (2),
(3) with $r \in \{1,2\}$, described in Proposition \ref{3.6}. Then
it follows from the classification of selfinjective configurations
of stable quivers $\mathbb{ZA}_{2p+1}$, $\mathbb{ZD}_n$ and
$\mathbb{ZE}_6$ given in \cite{BLR}, \cite{Rd1}, \cite{Rd2},
\cite{Rd3} (see also \cite{MP} for a general result) that there is
$(m,i) \in \mathbb{Z}\times \{1, \ldots, n\}$ such that
$\sigma(e_{m,i})=e_{m,i}$, and then $\sigma(e_{k,i})=e_{k,i}$ for
all $k \in \mathbb{Z}$. Hence, we obtain that $\sigma
P(k,i)=P(k,i)$ for all $k \in \mathbb{Z}$. But then
$\sigma\nu_{\widehat{B}}P(0,i)=\sigma P(1,i)=P(1,i)$ and
$\sigma\nu^2_{\widehat{B}}P(0,i)=\sigma P(2,i)=P(2,i)$. Observe
that we have in $\mod \widehat{B}$ a sequence of nonzero
nonisomorphisms
\[\xymatrix{P(0,i) \ar[r]^f & P(1,i)\ar[r]^h& P(2,i)}\]
with $\Im f =\soc (P(1,i))$ and $\Im h =\soc (P(2,i))$. Then we
obtain a short cycle in $\mod A$ of the form
\[\xymatrix{F_{\lambda}(P(0,i)) \ar[r]^{F_{\lambda}(f)} & F_{\lambda}(P(1,i))\ar[r]^{F_{\lambda}(h)}& F_{\lambda}(P(0,i))},\]
where $F_{\lambda}(P(2,i))=F_{\lambda}(P(0,i))$, because if
$g=\sigma \nu^2_{\widehat{B}}$, then
$gP(0,i)=\sigma\nu_{\widehat{B}}^2P(0,i)=P(2,i)$. We also note
that if $g=\sigma\nu_{\widehat{B}}$, then
$F_{\lambda}(P(1,i))=F_{\lambda}(P(0,i))$.

\textbf{(b)} Assume that $B$ is an exceptional tilted algebra of
type $\mathbb{A}_n$, and $\varrho$ is an automorphism of
$\widehat{B}$ with $\varrho^m=\nu_{\widehat{B}}$ and $m \geq 2$.
Then $g=\varrho^p$ for some $p \in \{1, \ldots, 2m \}$, by the
assumption imposed on $g$. Applying Proposition \ref{3.6}, we
conclude that $\widehat{B}\cong \widehat{B(F, T^m_S)}$ for the
exceptional tilted algebra $B(F, T^m_S)$ associated with a
division algebra $F$ and a Brauer tree $T^m_S$ with $n$ edges and
an exceptional vertex $S$ of multiplicity $m$. We may assume that
$B=B(F, T^m_S)$. Then $\widehat{B}/(\varrho)$ is the Brauer tree
algebra $A(F, T^m_S)$ associated with $F$ and $T^m_S$. Consider
the orbit algebra $A_k=\widehat{B}/(\varrho^k)$, for $k \in \{1,
\ldots, 2m\}$. Then $A_k=FQ_{A_k}/I_{A_k}$, where $Q_{A_k}$ is the
quiver of $A_k$ and $I_{A_k}$ is an ideal of the path algebra
$FQ_{A_k}$ of $Q_{A_k}$ over $F$. Since $A_1=A(F, T^m_S)$, the
quiver $Q_{A_1}$ contains a cycle $C_1$ of length $l$, with $l$
being the number of edges in $T^m_S$ connected to the exceptional
vertex $S$, and the composition of any $lm$ consecutive arrows in
$C_1$ does not belong to $I_{A_1}$. Hence, for any $k \in \{1,
\ldots, 2m\}$, the quiver $Q_{A_k}$ contains a cycle $C_k$ of
length $kl$ such that the composition of any $lm$ consecutive
arrows in $C_k$ does not belong to $I_{A_k}$. But then the cycle
$C_k$ contains two vertices $x$ and $y$ such that the shortest
paths $u$ from $x$ to $y$ and $v$ from $y$ to $x$ in $C_k$ do not
belong to $I_{A_k}$, and then the indecomposable projective
modules $P_{A_k}(x)$ and $P_{A_k}(y)$ in $\mod A_k$ at the
vertices $x$ and $y$ lie on a short cycle
\[P_{A_k}(x) \rightarrow P_{A_k}(y) \rightarrow P_{A_k}(x).\]
In particular, since $A=\widehat{B}/G=\widehat{B}/(\varrho^p)$,
$\mod A$ admits a short cycle of indecomposable projective
modules.

\textbf{(c)} Assume that $B$ is an exceptional tilted algebra of
type $\mathbb{D}_{3m}$, and $\varrho$ is an automorphism of
$\widehat{B}$ with $\varrho^3=\nu_{\widehat{B}}$. Then
$A=\widehat{B}/(\varrho^p)$ for some $p \in \{1, \ldots, 6\}$, by
the assumption imposed on $g$. Applying Theorem \ref{3.6}, we
conclude that $\widehat{B}=B^*(F,T^2_S)$ for the exceptional
tilted algebra $B^*(F,T^2_S)$ associated with a division algebra
$F$ and a Brauer tree $T^2_S$ with $m \geq 2$ edges and an extreme
exceptional vertex $S$ of multiplicity $2$. We may assume that
$B=B^*(F, T^2_S)$. Then $\widehat{B}/(\varrho)$ is isomorphic to
the modified Brauer algebra $D(F, T^2_S)$ associated with $F$ and
$T^2_S$. Consider the orbit algebra $D_k=\widehat{B}/(\varrho^k)$,
for $k \in \{1, \ldots, 6\}$. Then $D_k=FQ_{D_k}/J_{D_k}$, where
$Q_{D_k}$ is the quiver of $D_k$ and $J_{D_k}$ is an ideal in the
path algebra $FQ_{D_k}$ of $D_k$ over $F$. Since $D_1=D(F,
T^2_S)$, the quiver $Q_{D_1}$ contains a loop $C_1$ with $C^3_1$
not in $J_{D_1}$. Hence, for any $k \in \{1, \ldots, 6\}$, the
quiver $Q_{D_k}$ contains a cycle $C_k$ of length $k$ such that
the composition of any $3$ consecutive arrows in $C_k$ does not
belong to $J_{D_k}$. But then the cycle $C_k$ contains two
vertices $x$ and $y$ such that the shortest paths $u$ from $x$ to
$y$ and $v$ from $y$ to $x$ in $C_k$ are of length at most $3$,
and hence do not belong to $J_{D_k}$. Then we conclude that $\mod
D_k$  contains a short cycle
\[P_{D_k}(x) \rightarrow P_{D_k}(y) \rightarrow P_{D_k}(x),\]
with $P_{D_k}(x)$ and $P_{D_k}(y)$ being the indecomposable
projective modules in $\mod D_k$ at the vertices $x$ and $y$. In
particular, since $A=\widehat{B}/G=\widehat{B}/(\varrho^p)$, $\mod
A$ contains a short cycle of indecomposable projective modules.

Summing up, we have proved that (i) implies (ii).
\end{proof}

\section{Proof of the Theorem.}
Let $A$ be a selfinjective algebra over a field $K$. The
implication (ii)$\Rightarrow$(i) follows from Theorem \ref{3.4}.
We will show that (i) implies (ii). Assume that $\mod A$ has no
short cycles. If $A$ is a Nakayama algebra then the statement (ii)
follows from Proposition \ref{nowe4.1}. Hence, assume that $A$ is
not a Nakayama algebra. Applying Theorem \ref{4.1} and Lemma
\ref{4.A}, we conclude that $\Gamma_A$ admits a semiregular double
$\tau_A$-rigid stable slice $\Delta$. We note that $\Delta$ is a
Dynkin quiver, by the Riedtmann-Todorov theorem. Let $M$  be the
direct sum of the indecomposable modules lying on $\Delta$,
$I=r_A(M)$, and $B=A/I$. Then it follows from Theorem \ref{4.4}
that $H=\End_B(M)$ is a hereditary algebra of type $\Delta$,
$T=D(M)$ is a tilting module in $\mod H$ with $\End_H(T)$
isomorphic to $B$, and consequently $B$ is a tilted algebra of
Dynkin type $\Delta$. Further, applying Theorem \ref{4.4} again,
we obtain that $I$ is a deforming ideal of $A$ with $r_A(I)=eI$
for an idempotent $e$ of $A$, being a residual identity of
$B=A/I$, the algebra $A[I]$ is isomorphic to an orbit algebra
$\widehat{B}/(\psi\nu_{\widehat{B}})$ for a positive automorphism
$\psi$ of $\widehat{B}$, and $A$ is socle equivalent to $A[I]$. We
claim that $A$ is isomorphic to $A[I]$. Since $IeI=0$, it is
enough to show, by Proposition \ref{2.6}, that $e_i \neq
e_{\nu(i)}$ for any primitive summand $e_i$ of $e$,  where $\nu$
is the Nakayama permutation of $A$. Suppose that $e_i=e_{\nu(i)}$
for a primitive summand $e_i$ of $e$. Consider the indecomposable
projective module $P_i=e_iA$ in $\mod A$ given by $e_i$. Then, by
the definition of Nakayama permutation, the equality $e_i=e_{\nu
(i)}$ implies that $\top (P_i) \cong \soc (P_i)$. But then we have
in $\mod A$ a short cycle
\[\xymatrix{P_i \ar[r]^f & P_i \ar[r]^f & P_i}\]
where $f$ is a homomorphism whose image is $\soc (P_i)$, which
contradicts the assumption imposed on $A$. Therefore, $A$ is
isomorphic to $A[I]$, and consequently $A$ is isomorphic to the
orbit algebra $\widehat{B}/(\psi\nu_{\widehat{B}})$. Applying now
Theorem \ref{3.4}, we conclude that $A$ is isomorphic to an orbit
algebra $\widehat{B}/(\varphi\nu^2_{\widehat{B}})$ for a strictly
positive automorphism $\varphi$ of $\widehat{B}$. Summing up, we
have proved that (i) implies (ii).

\section*{Acknowledgements.}
The authors gratefully acknowledge support
 from the research grant DEC-2011/02/A/\ ST1/00216 of the National Science Center Poland.

The main result of the paper was presented by the first named
author during the conference "Advances in Representation Theory of
Algebras: Geometry and Homology" (CIRM Marseille-Luminy, September
2017).

\vspace{1cm} \noindent
Faculty of Mathematics and Computer Science\\
Nicolaus Copernicus University\\
Chopina 12/18\\
87-100 Toru\'n, Poland\\
E-mail: jaworska@mat.umk.pl (Alicja Jaworska-Pastuszak)

 \hspace{1.06cm}skowron@mat.uni.torun.pl (Andrzej Skowro\'nski)

\end{document}